\theoremstyle{plain}
\newtheorem{theorem}{THEOREM}[section]
\newtheorem{proposition}{PROPOSITION}[section]
\newtheorem{lemma}{LEMMA}[section]
\newtheorem{corollary}{COROLLARY}[section]
\theoremstyle{definition}
\newtheorem{definition}{DEFINITION}[section]
\newtheorem{remark}{Remark}[section]
\numberwithin{equation}{section}
\newcommand{\ga}{\alpha}
\newcommand{\gd}{\delta}
\newcommand{\gD}{\Delta}
\newcommand{\gl}{\lambda}
\newcommand{\gL}{\Lambda}
\newcommand{\gm}{\mu}
\newcommand{\gn}{\nu}
\newcommand{\gp}{\pi}
\newcommand{\gs}{\sigma}
\newcommand{\go}{\omega}
\newcommand{\gO}{\Omega}
\newcommand{\ov}{\overline}
\newcommand{\de}{\partial}
\newcommand{\R}{\mathbb {R}}
\newcommand{\RN}{\R ^N}
\newcommand{\Rm}{\R ^m}
\newcommand{\ldue}{\textbf{L}^2}
\newcommand{\huno}{\textbf{H}_0^1}
\newcommand{\be}{\begin{equation}}
\newcommand{\ee}{\end{equation}}
\begin{document}

\title[Symmetry for cooperative elliptic systems]
{Symmetry results for cooperative elliptic systems via linearization}
\author[Damascelli]{Lucio Damascelli}
\address{ Dipartimento di Matematica, Universit\`a  di Roma 
" Tor Vergata " - Via della Ricerca Scientifica 1 - 00173  Roma - Italy.} 
\email{damascel@mat.uniroma2.it}
\author[Pacella]{Filomena Pacella}
\address{Dipartimento di Matematica, Universit\`a di Roma
" La Sapienza " -  P.le A. Moro 2 - 00185 Roma - Italy.}
\email{pacella@mat.uniroma1.it}
\date{}
\thanks{Supported by PRIN-2009-WRJ3W7 grant}
\subjclass [2010] {35B06,35B50,35J47,35G60}
\keywords{Cooperative elliptic systems, Symmetry,
Maximum Principle,  Morse index}
\begin{abstract} In this paper we prove symmetry results for classical solutions of nonlinear cooperative elliptic systems in a ball or in annulus in $\RN$, $N \geq 2 $.
More precisely we prove that solutions having Morse index $j \leq N $ are foliated Schwarz symmetric if the nonlinearity is convex and a full coupling condition is satisfied along the solution.
   \end{abstract}

\maketitle



\section{  \textbf{Introduction and statement of the results} }
\label{se:1}

We consider a semilinear elliptic system of the type
\begin{equation} \label{modprob} 
\begin{cases} - \gD U =F(|x|,U) \quad &\text{in } \gO \\
U= 0 \quad & \text{on } \de  \gO
\end{cases}
\end{equation}
where $F=(f_1 , \dots , f_m)$ is a function belonging to $C^{1, \ga }([0,+\infty )\times \Rm; \Rm)$ and $\gO$ is a bounded domain in $\RN$, $m, N \geq 2$.
Here $U=(u_1, \dots ,u_m)$ is a vector valued function in $\gO$.\\
It is well known that systems of this type arise in many applications in different fields, we refer to \cite{Me}, \cite{Mu} for some of these.\par
We are interested in studying symmetry properties of classical solutions of \eqref{modprob} when $\gO$ is  rotationally symmetric i.e. a ball or an annulus centered at the origin  in $\RN$.  If $\gO$ is a ball and the system is cooperative  then by using the famous ''moving plane method'' (\cite{S}, \cite{GNN}) it can be proved 
(see \cite{BS1}, \cite{deF1}, \cite{deF2}, \cite{Tr} ) that every positive solution $U$ (i.e. $U_i >0$ in $\gO$ for any $i=1,\dots ,m$) is radial and radially decreasing under the additional hypothesis that each $f_i$ is nonincreasing with respect to $|x|$.\\
Here we consider solutions of any sign, do not require $f$ to be monotone in $|x|$ and would like to obtain results also in the case of an annulus in the same direction of the results obtained in \cite{P} and \cite{PW} for scalar equations. We recall that in \cite{P} and \cite{PW} (see also \cite{GPW}) it was proved that classical solutions with Morse index less than or equal to $N$ are foliated Schwarz symmetric if the nonlinearity is convex or its first derivative is convex and $\gO$ is a ball or an annulus.\\
Another symmetry result obtained by symmetrization for some particular systems, is contained in \cite{KP}.
\par 
Let us give the definition of a foliated Schwarz symmetric function.
\begin{definition}\label{foliatedSS}
Let $\gO$ be a rotationally symmetric domain in $\RN$, $N\geq 2$. We say that a continuous vector valued function $U=(u_1 , \dots ,u_m): \gO \to \Rm$ is foliated Schwarz symmetric if each component $u_i$ is foliated Schwarz symmetric with respect to the same vector $p \in \RN$. In other words there exists a vector $p \in \RN$, 
$|p|=1$, such that $U(x)$ depends only on $r=|x|$ and $\theta= \arccos \left ( \frac {x}{|x|}\cdot p  \right ) $ and $U$ is (componentwise) nonincreasing in $\theta $.
\end{definition}
\begin{remark} Let us observe that if $U$ is a solution of \eqref{modprob} and the system satisfies some coupling conditions, as required in Theorem \ref{fconvessa}, then the foliated Schwarz symmetry of $U$ implies that either $U$ is radial or it is strictly decreasing in the angular variable $\theta $. This will be deduced by the proof of Theorem \ref{fconvessa}.
\end{remark}
In order to state our results we need to define the Morse index for solutions of \eqref{modprob} and the coupling conditions for the system.  
However they will be restated and commented in Section 2 for general semilinear elliptic systems.
\begin{definition}\label{MorseIndex}
\begin{itemize}
\item [i)]
Let $U$ be a $C^2 (\gO;\Rm)$ solution of \eqref{modprob}. We say that $U$ is linearized stable (or that it has zero Morse index) if the quadratic form 
\begin{equation} 
\begin{split}
Q_U (\Psi ;\gO  ) &= \int _{\gO} \left [ |\nabla \Psi |^2 - J_F (|x|, U)(\Psi ,\Psi ) \right ]dx= \\
& \int _{\gO}\left [   \sum _{i=1}^m |\nabla \psi _i |^2 -\sum _{i,j=1}^m \frac {\de f_i}{\de u_j}(|x|, U(x)) \psi _i \psi _j \right ]  \, dx \geq 0
\end{split}
\end{equation}
for any $\Psi = (\psi _1, \dots , \psi _m) \in C_c^1 (\gO;\Rm)$ where $J_F (x,U)$ is the jacobian matrix of $F$ computed at $U$.
\item [ii)] $U$ has (linearized) Morse index  equal to the integer $\mu=\mu (U)\geq 1$ if $\mu $ is the maximal dimension of a subspace of  $ C_c^1 (\gO;\Rm)$ where the quadratic form is negative definite.
\end{itemize}
\end{definition}

\begin{definition} 
\begin{itemize}
\item We say that the system \eqref{modprob} is cooperative or weakly coupled in an open set $\gO' \subseteq \gO $  if 
$$  \frac {\de f _i} {\de u _j} (|x|, u_1, \dots , u_m) \geq 0 \quad \forall \; (x,u_1,\dots ,u_m ) \in \gO '  \times  \Rm 
$$
for any  $i,j=1,\dots ,m$ with  $ i \neq j$.
  \item  We say that the system \eqref{modprob} is fully  coupled along a solution $U$ in an open set $\gO' \subseteq \gO $ if  it is cooperative in $\gO ' $ and 
 in addition $\forall I,J \subset \{1, \dots ,m \}$ such that $I \neq \emptyset $, $J \neq \emptyset $, $I \cap J = \emptyset $, $I \cup J = \{1, \dots ,m \} $ there exist $i_0 \in I$, $j_0 \in J $ such that 
$$\text{meas }(\{ x \in \gO ' :   \frac {\de f _{i_0}} {\de u _{j_0}} (|x|, U(x)) > 0 \}) >0$$  
 \end{itemize}
\end{definition}
As it will be clear in the following sections the previous definition reflects the fact that the linearized system at a solution $U$ is weakly or fully coupled, according to Definition \ref{sistemilineariaccoppiati}.\par
Let us point out that  the last definition is slightly different from the usual one stated in the literature, because the ''full coupling conditions'' on the derivatives are required only when computed on the solution but on a subset $\gO '$. It is not difficult to see that there are systems which are fully coupled along some solution but not fully coupled in the usual sense. This was observed and used in \cite{MMP} and we will give some examples in Section 4.  \par 
\smallskip
Let $e\in S^{N-1}$ be a direction, i.e. $e \in \RN $, $|e|=1$,  and let us define the set 
 $$   \gO (e)= \{ x \in \gO : x \cdot e >0\}  $$

\begin{theorem} \label{fconvessa} Let $\gO$ be a ball or an annulus in $\RN $, $N \geq 2$, and let $U \in C^{3,\ga }(\ov {\gO}; \Rm)$ be a solution of \eqref{modprob} with Morse index $\gm (U) \leq N $. 
Moreover assume that:
\begin{itemize} 
\item [i) ]  The system is fully coupled along $U$ in $ \gO (e) $, for any $e\in S^{N-1}$.
 \item [ii) ] For any $i,j=1, \dots m $  \  $\frac {\de f_i} {\de u_j}(|x|,u_1, \dots , u_m)$ is nondecreasing in each variable  $u_k$,  $k=1, \dots , m $, for any $|x| \in \gO $.
\item [iii)]    if $m \geq 3 $ then,   for any $i\in  \{1, \dots ,m \}$,   $f_i(|x|,u_1,\dots , u_m)= \sum _{k \neq i} g_{ik}(|x|,u_i,u_k)$ where 
$g_{ik} \in C^{1, \ga }([0,+\infty )\times \R ^2)$.
  \end{itemize}
 Then $U$ is  foliated Schwarz symmetric and  if $U$ is not radial then it is strictly decreasing in the angular variable (see Definition \ref{foliatedSS}).
\end{theorem}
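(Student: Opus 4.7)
The plan is to adapt the scalar approach of \cite{P,PW} to the system setting, using the cooperative/fully coupled structure and hypotheses (ii)--(iii) as a replacement for scalar convexity. For each direction $e\in S^{N-1}$ let $\gs_e$ denote the reflection across $H_e=\{x\cdot e=0\}$ and define the componentwise reflected difference
\[
w_e(x)=U(x)-U(\gs_e x),\qquad x\in\gO(e).
\]
Rotational symmetry of $\gO$ together with the Dirichlet condition give $w_e=0$ on $\de\gO(e)$. A componentwise mean value theorem shows that $w_e$ solves a linear system
\[
-\gD w_e=C_e(x)\,w_e\text{ in }\gO(e),\quad C_e^{ij}(x)=\int_0^1\frac{\de f_i}{\de u_j}\bigl(|x|,tU(x)+(1-t)U(\gs_e x)\bigr)dt.
\]
By (i) this system is cooperative and fully coupled in $\gO(e)$; by (ii)---together with the pairwise structure in (iii) when $m\geq 3$, which forces $\frac{\de f_i}{\de u_j}$ to depend only on $|x|,u_i,u_j$---one obtains the pointwise comparison $C_e(x)\geq J_F(|x|,U(x))$ (entrywise) on $\{w_e\geq 0\}$ and the reverse inequality on $\{w_e\leq 0\}$.

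The heart of the proof is to exhibit a direction $e^*\in S^{N-1}$ for which $w_{e^*}$ does not change sign in $\gO(e^*)$. I argue by contradiction: if for every $e$ both $w_e^+$ and $w_e^-$ are nontrivial, I extend $w_e^+$ by odd reflection across $H_e$ to a function $\tilde w_e$ on $\gO$. Testing the equation for $w_e$ against $w_e^+$ and exploiting the comparison $C_e\geq J_F$ on $\{w_e\geq 0\}$ gives $Q_U(\tilde w_e)\leq 0$, with strict inequality after a smoothing to $C_c^1(\gO;\Rm)$. The assignment $e\mapsto \tilde w_e$ is continuous and odd in $e$, so projecting onto the negative eigenspace of $Q_U$---which by $\gm(U)\leq N$ has dimension at most $N$---produces a continuous odd map from $S^{N-1}$ into a subspace of dimension $\leq N$; a Borsuk--Ulam / dimension-count argument of the kind used in \cite{P,PW} then yields $N+1$ linearly independent test functions on which $Q_U<0$, contradicting $\gm(U)\leq N$.

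Once such $e^*$ is found (WLOG with $w_{e^*}\geq 0$), the strong maximum principle for fully coupled cooperative linear systems recalled in Section~2 forces either $w_{e^*}\equiv 0$---so $U$ is symmetric with respect to $H_{e^*}$---or every component of $w_{e^*}$ is strictly positive in $\gO(e^*)$. A rotating-plane argument along great circles in $S^{N-1}$, invoking the Morse-index step at every slice, shows that the set of symmetry directions is a great subsphere $\{e\in S^{N-1}:e\perp p\}$ for some $p\in S^{N-1}$, which is precisely foliated Schwarz symmetry of $U$ about $p$. Strict monotonicity in the angular variable $\theta$ in the non-radial case follows from the Hopf boundary lemma applied to $w_e$ for a direction $e$ orthogonal to $p$.

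The principal obstacle I anticipate is the Borsuk--Ulam counting in the second paragraph: extending the scalar argument to vector-valued $w_e$ requires that the bilinear forms associated to $C_e$ and to $J_F$ be comparable against the odd extensions $\tilde w_e$. Hypotheses (ii) and (iii) are tailored to this---(iii) reduces every mixed derivative $\frac{\de f_i}{\de u_j}$ to a function of only $u_i,u_j$, so that the monotonicity in (ii) is enough to dominate the cross terms $\frac{\de f_i}{\de u_j}\gps_i\gps_j$ along the segment from $U(\gs_e x)$ to $U(x)$; without (iii) one would need to monitor the monotonicity of $\frac{\de f_i}{\de u_j}$ in the extra arguments $u_k$ with $k\neq i,j$, which is essentially a stronger obstruction in the cooperative but non-pairwise setting.
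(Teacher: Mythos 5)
Your overall blueprint (reflected differences $w_e$, a linear system they solve, a Borsuk--Ulam step using $\mu(U)\le N$, then a rotating-plane finish) matches the paper's strategy, but there are three concrete gaps, two of which would derail the argument.

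\emph{The straight-line mean-value matrix does not give the required diagonal comparison.} You define $C_e^{ij}(x)=\int_0^1\frac{\partial f_i}{\partial u_j}\bigl(|x|,tU(x)+(1-t)U(\sigma_e x)\bigr)dt$, interpolating \emph{all} components simultaneously. For $i\ne j$ hypothesis (iii) saves you, since $\partial f_i/\partial u_j=\partial g_{ij}/\partial u_j$ depends only on $(u_i,u_j)$ and on $\{(w_e)_i\ge 0\}\cap\{(w_e)_j\ge 0\}$ both interpolated arguments are $\le$ their end-values, so monotonicity applies. But $\partial f_i/\partial u_i=\sum_{k\ne i}\partial g_{ik}/\partial u_i$ depends on every $u_k$, and on $\{(w_e)_i\ge 0\}$ you have no sign information on $(w_e)_k$ for $k\ne i$; if $u_k(x)<u_k(\sigma_e x)$ the interpolated $u_k$ exceeds $u_k(x)$, and hypothesis (ii) then pushes the integrand \emph{above} $\partial f_i/\partial u_i(|x|,U(x))$ rather than below. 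The paper avoids this by a telescoping decomposition: replace one component at a time, writing $b_{ii}$ as an integral in which the $k\ne i$ arguments are frozen at the unreflected values $u_k(x)$, and writing $b_{ij}$ ($i\ne j$) with $u_i$ frozen at the reflected value $u_i(\sigma_e x)$. That choice is not cosmetic: it is exactly what makes hypotheses (ii)--(iii) yield the one-sided pointwise comparison for \emph{all} entries including the diagonal, which in turn gives the inequality $Q_U\bigl((w_e)^+;\Omega(e)\bigr)\le 0$. (Incidentally, your stated inequality $C_e\ge J_F$ on $\{w_e\ge 0\}$ is the wrong way round; the needed and correct inequality is $C_e\le J_F$ there, equivalently $-b_{ij}\le \partial f_i/\partial u_j(|x|,U)$, and this is what makes $Q_U$ of the positive part nonpositive.)

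\emph{The principal eigenvalue is entirely absent from your proof, and it cannot be dispensed with.} Because $J_F(|x|,U)$ is in general not symmetric, the linearized operator $L_U$ is not self-adjoint; the eigenvalues you can access variationally are those of the symmetrized operator $\tilde L_U$. These control the sign of $Q_U$ (Morse index, Lemma~\ref{lemma5}) but they do \emph{not} characterize the maximum principle for $L_U$. The maximum principle for $-\Delta-J_F(|x|,U)$ on a half-domain $\Omega(e)$ --- needed both in the rotating-plane step and, more importantly, to control the angular derivative $U_\theta$ once a symmetry hyperplane has been found --- is governed by the principal eigenvalue $\tilde\lambda_1(\Omega(e))$ in the sense of Proposition~\ref{principaleigenvalue}, and one only has $\tilde\lambda_1(\Omega(e))\ge\lambda_1^{\text{s}}(\Omega(e))$, with possible strict inequality. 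The paper's Lemma~\ref{lemma3} hinges on the dichotomy: either $\tilde\lambda_1>0$, so $U_\theta\equiv 0$, or $\tilde\lambda_1=0$, so $U_\theta$ is the principal eigenfunction and hence does not change sign. Your sketch jumps from ``symmetry directions form a great subsphere'' to foliated Schwarz symmetry without the principal-eigenvalue mechanism that actually enforces angular monotonicity, and also without the full-coupling step (the paper's Lemma~\ref{lemma2}) that forces \emph{all} components to share the same axis $p$.

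\emph{The Borsuk--Ulam count is off.} Projecting an odd continuous map $e\mapsto\tilde w_e$ onto the whole negative eigenspace of $Q_U$, of dimension $\le N$, gives a map $S^{N-1}\to\mathbb{R}^N$, to which Borsuk--Ulam does not apply. The paper's version first normalizes (using the ratio of inner products with $\Phi_1$) so that the constructed function is automatically $L^2$-orthogonal to the first negative eigenfunction, and only then projects onto $\Phi_2,\dots,\Phi_j$, landing in $\mathbb{R}^{j-1}$ with $j-1\le N-1$; that is the map which must vanish. There is also a continuity issue with $e\mapsto w_e^{+}$ at directions where $w_e$ vanishes on a set of positive measure, whereas the paper's construction uses the first eigenfunctions $\Phi_{\pm e}$ of the symmetrized linearized operator on $\Omega(\pm e)$, which depend continuously on $e$ by simplicity of the first symmetric eigenvalue under full coupling.
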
 
 
  \begin{remark} \label{significatocondizioni}
  \begin{itemize}
 \item    Requiring that the system is fully coupled along $U$ in $\gO (e)$,  for any $e\in S^{N-1}$, is an assumption easily satisfied by most of the systems encountered in applications (see Section 4). It is essentially needed to ensure the validity of the strong maximum principle in the domains $\gO (e)$.
\item The monotonicity hypothesis ii) on the derivatives $\frac {\de f_i} {\de u_j}$ in Theorem \ref{fconvessa} implies that
 each $f_i$ is convex with respect to each variable $u_j $, $i,j=1,\dots ,m $.  \par
 \item The assumption iii), which states that each $f_i$ is the sum of functions which depend only on $u_i$ and one of the other variables $u_k$, is obviously not needed
 if $m=2$. \par
 Hypotheses  ii) and iii) together imply the following conditions, that will be exploited in the proof:
  \begin{itemize} 
\item [a) ]  $\frac {\de f_i} {\de u_i}(|x|,u_1, \dots , u_m)$ ( $= \sum _{k \neq i} \frac {\de g_{ik}}{\de u_i}(|x|,u_i,u_k)$ if $ m \geq 3$ ) is  nondecreasing in $u_i$ for any $i=1, \dots ,m$, i.e.  
if $U=(u_1, \dots , u_i, \dots ,u_m)$ and $\overline{U}= (u_1, \dots , \overline{u}_i, \dots ,u_m) $ with $u_i \leq \overline{u}_i$  then $\frac {\de f_i} {\de u_i}(|x|,U) \leq 
\frac {\de f_i} {\de u_i} (|x|,\overline{U}) $ for any $x \in \gO $.
\item [b) ] \  If $i\neq j $ \  $\frac {\de f_i} {\de u_j}(|x|,u_1, \dots , u_m) $ ( $= \frac {\de g_{ij}} {\de u_j}(|x|,u_i,  u_j)$ if $ m \geq 3$  ) depends only on $u_i$, $u_j$  and it is  nondecreasing in $u_i, u_j $, i.e.  if  $U=(u_1, \dots , u_m)$, $V=(v_1, \dots , v_m)$ and $u_i \leq v_i$, $ u_j \leq v_j$,  then 
$ \frac {\de f_i} {\de u_j}(|x|,U) \leq 
\frac {\de f_i} {\de u_j} (|x|, V) $ for any $i,j=1, \dots , m $ for any $x \in \gO $.
 \end{itemize}
 \end{itemize}
 \end{remark}
 Obviously the previous theorem holds in particular for stable solutions. However in this case, as for scalar equations, it is not difficult to see that the solution is radial, even without the assumption ii) and iii). 
 Therefore we have the following
 \begin{theorem}  \label{teorema2}
 If the system is fully coupled  along a stable solution $U$ in $\gO$, then $U$ is radial.
  \end{theorem}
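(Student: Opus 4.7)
The strategy is to feed angular derivatives of $U$ into the stability quadratic form $Q_U$. For indices $1\le i<j\le N$, let $T_{ij}:=x_i\partial_{x_j}-x_j\partial_{x_i}$ denote the infinitesimal generator of rotations in the $(x_i,x_j)$--plane, and set $W^{ij}:=T_{ij}U$ componentwise. Since $T_{ij}$ is tangent to every sphere centered at the origin, $W^{ij}=0$ on $\de\gO$ (this uses the fact that $\gO$ is a ball or annulus). Applying $T_{ij}$ to the system $-\gD U=F(|x|,U)$ and using $T_{ij}(|x|)=0$, the radial piece drops out and one obtains the linearized equation
\begin{equation*}
-\gD W^{ij}=J_F(|x|,U)\,W^{ij}\quad\text{in }\gO,\qquad W^{ij}=0\quad\text{on }\de\gO.
\end{equation*}
Testing this equation against $W^{ij}$ and integrating by parts yields $Q_U(W^{ij};\gO)=0$.

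Stability of $U$ means $Q_U(\Psi;\gO)\ge 0$ for every $\Psi\in C_c^1(\gO;\Rm)$, and hence, by density, for every $\Psi\in H_0^1(\gO;\Rm)$. Consequently, if some $W^{ij}\not\equiv 0$, it is a nontrivial minimizer of $Q_U$ subject to prescribed $L^2$--norm. Equivalently, the first Dirichlet eigenvalue $\gl_1$ of the linearized operator $L_U:=-\gD-J_F(|x|,U)$ equals $0$ and $W^{ij}$ lies in its eigenspace.

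The key structural input is now the Krein--Rutman type theorem for fully coupled cooperative linear systems (the same principal eigenvalue framework used throughout the paper and recalled in Section 2 for $L_U$ on $\gO$): since the system is fully coupled along $U$ in all of $\gO$, the principal eigenvalue $\gl_1$ is simple and its eigenspace is spanned by a single eigenfunction $\Phi_1$ all of whose components are strictly positive in $\gO$. Thus $W^{ij}$ would have to be a scalar multiple of $\Phi_1$. But each scalar component $W^{ij}_k=\de_\theta u_k$ (where $\theta$ is the angular coordinate in the $(x_i,x_j)$--plane) is the $\theta$--derivative of a $2\pi$--periodic function and hence either vanishes identically or changes sign on every circle $\{x_i^2+x_j^2=r^2\}$ with the remaining coordinates fixed. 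Such a function cannot be a positive multiple of $\Phi_1$ unless $W^{ij}\equiv 0$.

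Therefore $T_{ij}U\equiv 0$ for all $i\ne j$, which means $U$ is invariant under every rotation of $\RN$ about the origin, i.e.\ $U$ is radial. The main obstacle is really Step~3, the simplicity of the principal eigenvalue together with the positivity of $\Phi_1$ for the fully coupled cooperative system $L_U$; once this is available (and full coupling along $U$ in the whole $\gO$ is exactly the hypothesis needed for it), the rest is bookkeeping. Note that neither the convexity hypothesis (ii) nor the structural hypothesis (iii) of Theorem~\ref{fconvessa} is used, in accordance with the remark preceding the statement.
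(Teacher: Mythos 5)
Your proof is correct and follows the same route as the paper: the angular derivatives $T_{ij}U$ vanish on $\de\gO$, solve the linearized system, annihilate the quadratic form $Q_U$, and stability together with full coupling forces each nonzero $T_{ij}U$ to be a sign-definite first eigenfunction, contradicting $2\pi$-periodicity. One small caution about naming: what you call ``the first Dirichlet eigenvalue of $L_U$'' and ``the principal eigenvalue'' are a priori different objects when $J_F(|x|,U)$ is not symmetric; your Rayleigh-quotient step really identifies $W^{ij}$ as a first eigenfunction of the \emph{symmetric} operator $\tilde L_U$ (Proposition~\ref{varformautov}~v)), and the simplicity and positivity you then invoke are Proposition~\ref{varformautov}~viii) for $\tilde L_U$ --- the statement of Proposition~\ref{principaleigenvalue}~i) for the nonsymmetric $L_U$ only gives simplicity within the cone of positive eigenfunctions, which by itself would not let you conclude that the sign-changing $W^{ij}$ is a multiple of $\Psi_1$.
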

 
 We shall see in the proof of Theorem \ref{fconvessa} that for nonradial Morse index one solutions the following conditions hold.
 \begin{corollary} \label{corollario1}
 Under the assumptions of Theorem \ref{fconvessa} if a solution $U$ has Morse index one and it is not radial then necessarily
\be \label{superfullycoupling1}
 \sum_{j=1}^m  \frac {\de f _i}{\de u _j}(r, U(r, \theta)) \frac {\de u_j } {\de \theta  }(r, \theta )=
 \sum_{j=1}^m  \frac {\de f _j}{\de u _i}(r, U(r, \theta))   \frac {\de u_j } {\de \theta  }(r, \theta )
\ee
 for any    $i=1,\dots ,m $, whith $(r, \theta)$ as in Definition \ref{foliatedSS}. \\ 
   In particular  if $m=2$ then \eqref{superfullycoupling1} implies that 
 \be \label{superfullycoupling2} 
 \frac {\de f _1}{\de u _2}(|x|, U(x))=  \frac {\de f _2}{\de u _1}(|x|, U(x)) \; , \quad \forall \, x \in \gO \;
 \ee
 \end{corollary}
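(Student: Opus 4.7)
The plan is to exploit a dichotomy between two linearizations at $U$: the genuine linearized operator $L\Psi=-\Delta\Psi-J_F(|x|,U)\Psi$, whose kernel automatically contains every derivative of $U$ along a symmetry of the equation, and the symmetrized operator $L^{\mathrm{sym}}\Psi=-\Delta\Psi-\tfrac12(J_F+J_F^T)(|x|,U)\Psi$ associated with the quadratic form $Q_U$. Equation \eqref{superfullycoupling1} emerges as the statement that $\frac{\de U}{\de \theta}\in\ker(L-L^{\mathrm{sym}})$, and the key is to promote it from $\ker L$ into $\ker L^{\mathrm{sym}}$ via the Morse index one hypothesis.

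By Theorem \ref{fconvessa}, $U$ is foliated Schwarz symmetric with some axis $p\in S^{N-1}$; after a rotation I may take $p=e_1$. For each $j=2,\dots,N$ introduce the rotational derivative
\[
 W^j(x):=x_1\,\de _{x_j}U(x)-x_j\,\de _{x_1}U(x),
\]
the directional derivative of $U$ along the Killing field generating rotations in the $(x_1,x_j)$-plane. Differentiating $-\Delta U=F(|x|,U)$ gives $LW^j=0$; clearly $W^j=0$ on $\de\gO$; and foliated Schwarz symmetry forces $U\circ\gs_j=U$, where $\gs_j$ reflects the $j$-th coordinate, whence $W^j\circ\gs_j=-W^j$ and in particular $W^j\equiv 0$ on $\{x_j=0\}$. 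Testing $LW^j=0$ against $W^j$ and integrating by parts gives
\[
 Q_U(W^j;\gO)=\int_\gO (LW^j)\cdot W^j\,dx=0.
\]

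Now I use $\gm(U)=1$ to upgrade this to $L^{\mathrm{sym}}W^j=0$. Since full coupling and cooperativity pass to the symmetric part $\tfrac12(J_F+J_F^T)$, a Krein--Rutman argument applied to $L^{\mathrm{sym}}$ yields a simple principal eigenvalue $\gl_1<0$ with a componentwise positive eigenfunction $\Phi_1$, while the Morse index hypothesis gives $\gl_k\ge 0$ for all $k\ge 2$. The coefficients of $L^{\mathrm{sym}}$ depend on $x$ only through $|x|$ and $U(x)$, and $U\circ\gs_j=U$ makes $L^{\mathrm{sym}}$ commute with $\gs_j$; simplicity then forces $\Phi_1\circ\gs_j=\Phi_1$. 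Combined with $W^j\circ\gs_j=-W^j$ this gives $\langle W^j,\Phi_1\rangle_{L^2}=0$, and expanding $W^j=\sum_{k\ge 2}c_k\Phi_k$ in the $L^2$-orthonormal eigenbasis turns $Q_U(W^j)=0$ into $\sum_{k\ge 2}\gl_k c_k^2=0$, which forces $c_k=0$ whenever $\gl_k>0$, i.e.\ $L^{\mathrm{sym}}W^j=0$.

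Subtracting, $(L-L^{\mathrm{sym}})W^j=-\tfrac12(J_F-J_F^T)(|x|,U)W^j=0$. In the parametrization $x=r(\cos\theta\, e_1+\sin\theta\,\go)$ with $\go\in S^{N-2}\subset e_1^\perp$, a chain-rule computation yields $W^j(x)=\go_j\,\de _\theta U(r,\theta)$; letting $j$ vary at points off the axis where the different $\go_j$ take any prescribed values produces $(J_F-J_F^T)(|x|,U(x))\de _\theta U=0$, which componentwise is precisely \eqref{superfullycoupling1}. For $m=2$ the matrix $J_F-J_F^T$ has the single scalar entry $\frac{\de f_1}{\de u_2}-\frac{\de f_2}{\de u_1}$, so \eqref{superfullycoupling2} holds at every $x$ where $\de _\theta U(x)\ne 0$; non-radiality of $U$ combined with strong unique continuation for the linear elliptic system satisfied by the components of $\de _\theta U$ shows that this set is dense, and continuity in $x$ then propagates \eqref{superfullycoupling2} to all of $\gO$. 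The main obstacle lies in the passage from $Q_U(W^j)=0$ and $\gm(U)=1$ to $L^{\mathrm{sym}}W^j=0$: it requires a Krein--Rutman principle for the symmetrized fully coupled cooperative system together with the symmetry transfer $\Phi_1\circ\gs_j=\Phi_1$, which is what ultimately converts the Morse index one assumption into a pointwise symmetry condition on $J_F$ along the solution.
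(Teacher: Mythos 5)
Your proof is correct, but it takes a genuinely different route from the paper's. The paper's argument runs on a half-domain: from the proof of Theorem \ref{fconvessa} (via the rotating-plane step) it extracts a direction $e$ with $U$ symmetric about $T(e)$ and principal eigenvalue $\tilde{\gl}_1(\gO(e))\geq 0$; if $U$ is not radial this value must be $0$, it equals $\gl_1^{\text{(s)}}(\gO(e))$, and Proposition \ref{principaleigenvalue}\,iv) then identifies $U_\theta$ as the (positive) common first eigenfunction and forces $D\Psi_1=C\Psi_1$, i.e. \eqref{superfullycoupling1}; positivity of $U_\theta$ in $\gO(e)$ then gives \eqref{superfullycoupling2} directly for $m=2$. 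You instead stay on the whole domain $\gO$: you introduce the rotational Killing derivatives $W^j$, compute $Q_U(W^j;\gO)=0$ from $LW^j=0$, and crucially replace the half-domain reduction by a parity argument ($\Phi_1\circ\gs_j=\Phi_1$, $W^j\circ\gs_j=-W^j$) to deduce $W^j\perp\Phi_1$, after which Morse index one forces $W^j\in\ker L^{\mathrm{sym}}$ and the pointwise identity $(J_F-J_F^T)W^j=0$ drops out. This global version is clean and more self-contained (it uses only the statement of Theorem \ref{fconvessa}, not the internals of the rotating-plane step, and avoids the nonsymmetric principal-eigenvalue machinery of Proposition \ref{principaleigenvalue} almost entirely), and the Killing-field formulation $W^j=\go_j\,\de_\theta U$ is a nice way to make \eqref{superfullycoupling1} precise. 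What the paper's route buys, in exchange for the half-domain bookkeeping, is that $U_\theta$ comes out as a \emph{principal} eigenfunction on $\gO(e)$ and hence strictly positive componentwise; that positivity is exactly what yields \eqref{superfullycoupling2} on all of $\gO(e)$ (and then on $\gO$ by antisymmetry) without further tools. Your version, having only $W^j\in\ker L^{\mathrm{sym}}(\gO)$ with no sign information, must fall back on a density argument and invoke (weak) unique continuation for the linearized system to conclude \eqref{superfullycoupling2} for $m=2$; this is fine but is an extra input the paper avoids, and you should say explicitly which unique-continuation theorem for cooperative systems with diagonal principal part and $L^\infty$ lower-order terms you are appealing to. One more small point: your appeal to Krein--Rutman is unnecessary here — simplicity and positivity of the first eigenfunction of $L^{\mathrm{sym}}$ in $\gO$ are already Proposition \ref{varformautov}\,viii), which is the variational characterization plus the strong maximum principle, and that is all your parity argument needs.
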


 \begin{remark} The result of Corollary \ref{corollario1} is somewhat surprising because it asserts, under the hypotheses of Theorem \ref{fconvessa}, that for any nonradial Morse index one solution another coupling condition, namely \eqref{superfullycoupling1} (and in particular  \eqref{superfullycoupling2} if $m=2$), must hold along the solution. Some consequences of this, for a particular system, will be illustrated in Section 4. 
  \end{remark}
  
  Let us now explain the strategy of the proof of Theorem \ref{fconvessa}.\par
  As in the scalar case the key idea, introduced in \cite{P}, to get the symmetry of the solution, exploiting the information on its Morse index, is to use convexity assumptions on the nonlinearity to relate the linearized operator at the solution with the linear operator that arises when considering the difference between the solution and its reflection with respect to an hyperplane passing through the origin (see \cite{P},  \cite{PR}, \cite{PW}, \cite{GPW}). To do this, in the scalar case and for bounded domains $\gO$, the information on the quadratic form associated to the linearized operator at the solution, deduced by its Morse index, is exploited by means of the eigenvalues of the linearized operator. Indeed this operator is selfadjoint and so there is a variational characterization of the eigenvalues which links them to the quadratic form.
  This is partially used also in the unbounded domain case (see \cite{GPW}) even if the spectrum cannot be easily described, exploiting the eigenvalues  of the linearized operator in large balls.\par
  In the case of systems this no longer holds. Indeed if the Jacobian matrix $J_F (|x|,U)$ is not symmetric the linearized operator $L_U$ (see \eqref{linearizedoperator}) is not selfadjoint, even in  bounded domains. Then no variational characterization of eigenvalues can be exploited and hence the scalar case approach cannot be straigtforward followed.\par
  To bypass this difficulty we associate to the Jacobian matrix $J_F (|x|,U)$ its symmetric part $\frac 12 (J_F (|x|,U)+J_F^t (|x|,U) $, where $J_F^t $ is the transpose of the matrix $J_F$ (see \eqref{matricesimmetricaassociata} in the linear case), which then gives rise to the selfadjoint operator $\tilde{L}_U$ (see \eqref{symmetriclinearizedoperator} ) whose spectrum can be variationally characterized.\par
  The crucial, simple remark is that the quadratic form associated to the linearized operator $L_U$  is the same as the quadratic form associated to the selfadjoint operator $\tilde{L}_U$. \par
  Therefore the eigenvalues of $\tilde{L}_U$ can be exploited to study the symmetry of the solution $U$, using the information on the Morse index. 
  However, though essential in our proofs, the consideration of these eigenvalues is not sufficient to reach the conclusion of our theorems, but we also need to use the principal eigenvalue of the linearized operator, that can be defined for general linear operators (see \cite{BNV}, \cite{BS2},\cite{Si}). This can be understood by the fact that the positivity of this  eigenvalue is a necessary and sufficient condition for the (weak) maximum principle to hold (see Proposition \ref{principaleigenvalue}) and the maximum principle is another key-ingredient in our proofs.\par
  Obviously if the jacobian matrix $J_F (|x|,U)$ is symmetric then the linearized operator is selfadjoint and the classical spectral theory holds, in particular the principal eigenvalue coincides with the first eigenvalue and all proofs are simpler. \\
  This happens, for example, when the system is of \emph{gradient type}, i.e. when $F= \text{grad } g$ for some scalar function $g$ (see \cite{deF2}). In this case the linearized operator corresponds to the second derivative of a suitable associated functional.\par
  However this is not the case for many interesting systems, like e.g. the so called  \emph{hamiltonian systems} (see \cite{deF2} and the references therein). \par 
       Another important remark about the  assumptions of our theorems is that, as in previous symmetry results for systems (see \cite{BS1}, \cite{deF1}, \cite{KP}), coupling conditions on the system are necessary. Indeed it is easy to construct counterexamples to the symmetry if they do not hold. \par In particular, since the definition of foliated Schwarz symmetry for vector-valued functions requires that all components are foliated Schwarz symmetric with respect to the same vector, a \emph{strong} coupling condition is needed.\par
  Finally let us make a general comment on symmetry theorems of the type described in this paper. \\
  As for the scalar case, to apply our symmetry results, some information on the Morse index of the solution is needed.  So the question is: how to get it? \par
  If the system is of gradient type     then, often, variational methods, used to prove the existence of solutions, also carry information on the Morse index (\cite{AC}, \cite{deF2}, \cite{MMP}). A standard example is given by solutions obtained by the Mountain Pass theorem.\\
  If the system is not of this type it could happen that using variational methods, one ends up with the second derivative of a functional which is strongly indefinite. This is for example the case of the so called Hamiltonian systems. \par 
  However this does not mean that solutions do not have finite (linearized) Morse index, because the linearized operator is not the second derivative of such functionals.  
Then the linearized operators and its Morse index could be studied by some other methods, like continuation techniques. We give some examples in Section 4.\par
  Since the study of the linearized system is also important to understand the orbital stability of the solution and informations on the Morse index are useful to get qualitative properties of solutions,  we believe that further investigation in this direction should be done.\par
  The outline of the paper is the following. In Section 2 we state and prove some important results on the spectral theory and maximum principles for linear systems. Most of them are either well known or could be easily deduced by known theorems, but we think that is worth to recall and collect them together to make the paper self-contained and to avoid to give vague references to the reader.\\
  In Section 3 we show preliminary results on our semilinear system \eqref{modprob} and prove our symmetry results.\\
  In Section 4 we present a few examples and make further comments.

 

\section{  \textbf{Preliminaries on linear systems: spectral theory and maximum principles} }
\label{se:2}
Let $\gO$ be any smooth bounded domain in $\RN$, $N \geq 2$, and $D$  a $m \times m $ matrix with bounded entries:
\begin{equation}\label{ipotesiD} D= \left (  d_{ij} \right ) _{i,j=1}^m \; , \; d_{ij} \in L^{\infty} (\gO)
\end{equation}
Let us consider the linear elliptic  system
\begin{equation}  \label{linearsystem} 
\begin{cases} - \gD U + D(x) U = F \quad &\text{in } \gO \\
U= 0 \quad & \text{on } \de  \gO
\end{cases}
\end{equation}
i.e. 
$$
\begin{cases}
 - \gD u_1 +  d_{11} u_1+  \dots   +d_{1m} u_m =f_1  & \text{ in } \gO  \\
  \dots  \dots & \dots \\ 
 - \gD u_m +  d_{m1} u_1+  \dots   +d_{mm} u_m =f_m  & \text{ in } \gO \\
  u_1=  \dots  = u_m   =0  & \text {  on } \de \gO 
\end{cases}
$$
 where $F=(f_1, \dots , f_m) \in (L^2 (\gO))^m $,  $U=(U_1, \dots , U_m) $. \\
 This kind of linear system  appears in the linearization of the semilinear  elliptic system  \eqref{modprob}.
 \begin{definition}\label{sistemilineariaccoppiati} 
 The system \eqref{linearsystem} is said to be
 \begin{itemize}
 \item  \emph{cooperative} or \emph{weakly coupled} in $ \gO $ if 
 \begin{equation} \label{weaklycoupled} d_{ij}  \leq 0 \, \text{a.e. in }\gO , \quad \text{whenever } i \neq j
 \end{equation}
 \item \emph{fully coupled} in $ \gO $ if it is weakly coupled in $\gO $ and the following condition holds:
 \begin{equation}\label{fullycoupled}
 \begin{split}
  \forall \, I,J \subset \{ 1, \dots , m \}\, ,\, & I,J \neq \emptyset \, , \, I \cap J = \emptyset \, , \, I \cup J = \{ 1, \dots , m \} \, \\
  &  \exists i_0 \in I \, , \, j_0 \in J \, : \text{meas } (\{ x \in \gO  : d_{i_0j_0} <0 \}) >0
  \end{split}
 \end{equation}
 \end{itemize}
 \end{definition}
It is well known that either condition \eqref{weaklycoupled} or conditions \eqref{weaklycoupled} and \eqref{fullycoupled} together are needed in the proofs of maximum principles for systems (see \cite{deF1}, \cite{deFM}, \cite{Si} and the references therein).
In particular if both are fulfilled the strong maximum principle holds as it is shown in the next  theorem.\par
 \textbf{Notation remark}:  here and in the sequel inequalities involving vectors should be understood  to hold componentwise, e.g. if  $\Psi = (\psi _1 , \dots , \psi _m) $,   $\Psi $ nonnegative  means that $\psi _j \geq 0 $ for any index $j=1, \dots , m$. 
\begin{theorem}\label{SMP} (Strong Maximum Principle and Hopf's Lemma).  Suppose that \eqref{ipotesiD},   \eqref{weaklycoupled} and \eqref{fullycoupled} hold  and $U=(u_1, \dots , u_m) \in C^1 (\ov {\gO};\Rm)$  is a weak solution of the inequality 
$$- \gD U + D(x) U \geq 0 \text{ in } \gO
$$ 
  i.e. 
\begin{equation} 
 \int _{\gO}  \nabla U \cdot  \nabla \Psi   + D(x) (U ,\Psi )= 
 \int _{\gO}\left [   \sum _{i=1}^m  \nabla u_i \cdot  \nabla \psi _i  +\sum _{i,j=1}^m d_{ij}(x) u_i  \psi _j \right ]  \, dx \geq 0
\end{equation}
for any nonnegative $\Psi = (\psi _1 , \dots , \psi _m) \in C_c^1 (\gO  ; \Rm)$.  \\
If $U \geq  0 $ in $\gO $, then either $U \equiv 0 $ in $\gO $ or $U>0 $ in $\gO $. In the latter case if $P \in \partial \gO $ and $U(P)=0$ then $\frac {\de U }{\de \gn }(P) < 0$, where $\gn $ is the unit exterior normal vector at $P$.
\end{theorem}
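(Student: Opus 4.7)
The strategy is to reduce the system strong maximum principle to the scalar one, component by component, and then use the full coupling condition to rule out mixed cases.

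First, I would extract from the weak system inequality a scalar weak inequality for each $u_i$. Taking test functions of the form $\Psi = (0,\dots,\psi,\dots,0)$ with $\psi \geq 0$, $\psi \in C_c^1(\Omega)$ in the $i$-th slot, the hypothesis rewrites as
\begin{equation*}
\int_{\Omega}\bigl[\nabla u_i\cdot\nabla\psi + d_{ii}u_i\psi\bigr]\,dx \geq -\int_{\Omega}\Bigl(\sum_{j\neq i}d_{ij}u_j\Bigr)\psi\,dx.
\end{equation*}
By the weak coupling \eqref{weaklycoupled} we have $d_{ij}\leq 0$ for $j\neq i$, and by assumption $u_j\geq 0$, so the right-hand side is nonnegative. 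Hence each $u_i$ is a $C^1$ weak supersolution of the scalar equation $-\Delta u_i + d_{ii}u_i \geq 0$ in $\Omega$, with $d_{ii}\in L^\infty(\Omega)$ and $u_i\geq 0$.

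The next step is to apply the classical scalar strong maximum principle and Hopf boundary lemma (valid for $L^\infty$ zero-order coefficient) to each $u_i$. This yields the dichotomy: for each $i$, either $u_i\equiv 0$ in $\Omega$, or $u_i>0$ in $\Omega$; and in the latter case, if $u_i(P)=0$ at $P\in\partial\Omega$, then $\frac{\partial u_i}{\partial\nu}(P)<0$.

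The main remaining point, and the one where the full coupling condition \eqref{fullycoupled} intervenes, is to exclude a mixed alternative. Define
\begin{equation*}
I := \{i : u_i\equiv 0\text{ in }\Omega\}, \qquad J := \{i : u_i>0\text{ in }\Omega\}.
\end{equation*}
Then $I\cup J=\{1,\dots,m\}$ and $I\cap J=\emptyset$. I would assume for contradiction that both $I$ and $J$ are nonempty. By \eqref{fullycoupled} there exist $i_0\in I$, $j_0\in J$ with $\mathrm{meas}\{x\in\Omega : d_{i_0 j_0}(x)<0\}>0$. Since $u_{i_0}\equiv 0$, the $i_0$-th scalar inequality derived above forces
\begin{equation*}
-\sum_{j\neq i_0} d_{i_0 j}\,u_j \leq 0 \quad\text{a.e. in }\Omega;
\end{equation*}
but each summand $-d_{i_0 j}u_j$ is nonnegative, so every term vanishes a.e. In particular $d_{i_0 j_0}\,u_{j_0}=0$ a.e.\ in $\Omega$, which contradicts $u_{j_0}>0$ in $\Omega$ combined with $d_{i_0 j_0}<0$ on a set of positive measure. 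Hence either $I=\emptyset$, giving $U>0$ in $\Omega$, or $J=\emptyset$, giving $U\equiv 0$. The Hopf conclusion in the positive case then follows from the componentwise scalar Hopf lemma already recorded. The only subtlety is the careful bookkeeping in the testing-and-reduction step, since we must ensure the scalar SMP/Hopf applies to a $C^1$ weak supersolution with merely $L^\infty$ lower-order coefficient; this is standard once the reduction is in place.
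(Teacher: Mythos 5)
Your proof is correct and follows essentially the same route as the paper: reduce to scalar weak supersolutions for each component using weak coupling, apply the scalar strong maximum principle and Hopf lemma componentwise, then use full coupling to rule out the mixed alternative. Your final contradiction is obtained slightly more directly (from the weak inequality itself rather than by re-invoking the scalar SMP), but this is a cosmetic difference, not a genuinely different argument.
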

\begin{proof} It suffices to apply the scalar strong maximum principle using  \eqref{weaklycoupled} to obtain that  each component $u_k$ is either identically equal to zero or positive, and then to use  \eqref{fullycoupled} to prove that the same alternative holds for all the other components. To be more precise, for any equation we have 
$- \gD u_j + d_{jj}u_j \geq  \sum _{i\neq j} - d_{ij} u_i \geq 0 $, since $d_{ij} \leq 0 $ if $i\neq j$ and $u_i \geq 0$. This implies that, for any $j=1, \dots ,m$, either $u_j \equiv 0 $ or $u_j >0$ and in the latter case by Hopf's Lemma we have the sign of the normal derivative on  a point of the boundary where the function $u_j$ vanishes.\\ 
Suppose now that $U$ does not vanish identically in $\gO $ and let $J \subset \{1, \dots ,m \}$ the set of indexes $j$ such that $u_j>0 $ in $\gO$. Suppose by contradiction that  $J$ is a proper  subset of $ \{1, \dots ,m \}$, and let $ I  = \{1, \dots ,m \} \setminus  J $ the set of indexes $i$ such that $u_i \equiv 0$. If $i_0$, $j_0$ are as in \eqref{fullycoupled} then 
$- \gD u_{i_0} + d_{i_0 i_0 }u_{i_0} \geq - \sum _{j\neq i_0 } - d_{i_0 j} u_j \geq - d_{ i_{0}, j_{0}} u_{j_0}  \not \equiv  0 $, so that by the scalar strong maximum principle we get $u_{i_0}>0$, which is a contradiction.
\end{proof}
Let us recall that weak maximum principles, Harnack inequalities and other estimates have been studied in many papers with general conditions and also for elliptic operators not in divergence form (see  \cite{Si} and the references therein).
\\
We are interested in the quadratic form associated with system \eqref{linearsystem}, namely
\begin{equation} \label{formaquadraticalineari}
\begin{split}
Q (\Psi ; \gO ) &= \int _{\gO} \left [ |\nabla \Psi |^2 + D(x) (\Psi ,\Psi )\right ] dx= \\
& \int _{\gO}\left [   \sum _{i=1}^m |\nabla \Psi _i |^2 +\sum _{i,j=1}^m  d_{ij}(x)  \Psi _i \Psi _j \right ]  \, dx 
\end{split}
\end{equation}
for  $\Psi \in C_c^1 (\gO;\Rm)$ ( or $\Psi \in  H_0^1 (\gO;\Rm)$ ).\\
It is easy to see that this quadratic form coincides with the quadratic form  associated to the symmetric system
\begin{equation}  \label{symmetricsystem}  
\begin{cases} - \gD U + C(x) U = F \quad &\text{in } \gO \\
U= 0 \quad & \text{on } \de  \gO
\end{cases}
\end{equation}
i.e. 
$$
\begin{cases}
 - \gD u_1 +  c_{11} u_1+  \dots   +c_{1m} u_m &=f_1 \\
  \dots  \dots & \dots \\ 
 - \gD u_m +  c_{m1} u_1+  \dots   +c_{mm} u_m &=f_m
\end{cases}
$$
 where 
 \begin{equation}\label{matricesimmetricaassociata} C=  \frac 12 (D + D^{t}) \quad  \text { i.e. } \quad C=(c_{ij}), \quad     c_{ij} =  \frac 12 \, (d_{ij} + d_{ji})
 \end{equation}
So to study the sign of the quadratic form $Q$ we can also use the properties of the symmetric system.\\
Therefore we review briefly the spectral theory for this kind of simmetric systems, and use it to prove some results that we need for the possible nonsymmetric system  \eqref{linearsystem}.
\begin{remark} If system  \eqref{linearsystem} is cooperative, respectively fully coupled, so is  the associate symmetric system  \eqref{symmetricsystem}. 
\end{remark}
 \subsection{Spectral theory for symmetric systems}
 Let $\gO$ be a bounded domain in $\RN$, $N \geq 2$, and consider for $m \geq 1$ the Hilbert spaces 
 $\ldue = \textbf{L}^2 (\gO ) = \left ( L^2(\gO) \right )^m$, $\huno = \textbf{H}_0^1 (\gO)= \left ( H_0^1(\gO) \right )^m$, where if $f=(f_1, \dots , f_m)$, $g=(g_1, \dots , g_m)$ the scalar products are defined by 
 \be 
 \begin{split}
 &(f,g)_{\ldue}= \sum_{i=1}^m (f_i ,g_i )_{L^2(\gO)}=  \sum_{i=1}^m \int _{\gO} f_i \, g_i  \, dx \\ 
 &(f,g)_{\huno}= \sum_{i=1}^m (f_i ,g_i )_{H_0^1(\gO)}=  \sum_{i=1}^m \int _{\gO} \nabla f_i \,\cdot \, \nabla g_i  \, dx
\end{split}
 \ee
Let $C=C(x)=(c_{ij}(x))_{i,j=1}^m $ a symmetric matrix whose elements are bounded functions:
\be \label{ipotesiC} c_{ij} \in L^{\infty} \quad , \quad c_{ij}=c_{ji} \quad \text{ a.e. in }  \gO
\ee  
 and consider the bilinear forms
 \be \label{bilinearform} B(U,\Phi)= \int _{\gO} \left [ \nabla U \cdot \nabla \Phi + C(U, \Phi) \right ] = \int _{\gO} \left [\sum_{i=1}^m  \nabla u_i \cdot \nabla \phi _i +\sum _{i,j=1}^m c_{ij}u_i \phi _j \right ] 
 \ee
 and, for $\gL >0 $  
 \be 
 \begin{split}
 &B^{\gL}(U,\Phi)= \int _{\gO} \left [ \nabla U \cdot \nabla \Phi + (C+\gL I )(U, \Phi) \right ] \\
 &= \int _{\gO} \left [\sum_{i=1}^m  \left (\nabla u_i \cdot \nabla \phi _i  + \gL u_i  \phi _i\right ) +\sum _{i,j=1}^m c_{ij}u_i \phi _j \right ] 
 \end{split}
 \ee
 Since $c_{ij}\in L^{\infty}$ and $c_{ij}=c_{ji}$,  $B$ and $B^{\gL}$ are continuous symmetric bilinear forms, and, since $|\int _{\gO}c_{ij}u_i \phi _j | \leq c \int _{\gO} (u_i^2 + \phi _j ^2)$, there exists $\gL \geq 0 $ such that $ B^{\gL}$ is coercive in  $ \huno $, i.e it is an equivalent scalar product in $\huno $.\\
 By the Riesz representation theorem, identifying $F\in \ldue$ with the linear functional \ \ $U \in \huno \mapsto (U,F)_{\ldue}$ , \  \ 
 for any $F \in \ldue $ there exists a unique $U=: T\, F \in \huno $ such that $\Vert U \Vert _{\huno} \leq c \Vert f \Vert _{\ldue} $ and 
 $B^{\gL}(U,\Phi) = (F,U)_{\ldue} $ for any $\Phi \in \huno$, i.e. $U$ is the unique weak solution of the system
  \begin{equation} 
\begin{cases} - \gD U + \left (C(x) + \gL I \right ) U = F \quad &\text{in } \gO \\
U= 0 \quad & \text{on } \de  \gO
\end{cases}
\end{equation}
i.e.
 $$ 
 \begin{cases}   - \gD u_1 +  (c_{11} + \gL ) u_1+  \dots   +c_{1m} u_m  = f_1  \quad  & \text{ in } \gO  \\
   \dots  &  \dots \\ 
 - \gD u_m +  c_{m1} u_1+  \dots   +(c_{mm} + \gL ) u_m  = f_m  \quad  & \text{ in } \gO \\
   u_1=  \dots  = u_m   =0  & \text {  on } \de \gO 
 \end{cases}
 $$

 Moreover $T: F \mapsto U $, maps  $\ldue $ into $ \ldue $ and  is \emph{compact} because of the compact embedding of $\huno $ in $\ldue $,
 it is a \emph{positive} operator, since $ (TF,F)_{\ldue} = (U,F)_{\ldue}= B^{\gL}(U,U) >0  $ if $F \neq 0$ which implies $U \neq 0$ (recall that $ B^{\gL} $ is an equivalent scalar product in $\huno $ ), and,  since $C$ is symmetric, it is also \emph{selfadjoint}. Indeed 
 if $TF=U$, $TG=V$, i.e $B^{\gL}(U,\Phi) = (F,U)_{\ldue} $,   $B^{\gL}(V,\Phi) = (G,V)_{\ldue} $ for any $\Phi \in \huno$, then
 $(TF,G)_{\ldue} =(U,G)_{\ldue} = (G,U)_{\ldue} =B^{\gL}(V,U)  =B^{\gL}(U,V)=(F,V)_{\ldue} = (F,TG)_{\ldue}   $.\\
 Thus, by the spectral theory of positive compact selfadjoint operators in Hilbert spaces there exist a nonincreasing sequence $\{ \gm _j^{\gL} \} $ of eigenvalues with 
 $\lim _{j \to \infty}\gm _j^{\gL}=0 $ and a corresponding sequence $\{ W^j \} \subset \huno $ of eigenvectors such that $G(W^j)= \gm _j^{\gL} W^j $.
 Putting $  \gl _j^{\gL} = \frac 1 { \gm _j^{\gL}  }$ then $W_j$ solves the systems
 $ - \gD W^j + (C + \gL I) W^j =   \gl _j^{\gL} W^j, \quad W^j =0 \text{ on } \de \gO
 $. 
 Translating, and denoting by $\gl _j$ the differences $\gl _j=  \gl _j^{\gL} -\gL  $, we conclude that there exist a sequence $\{\gl _j \}$ of eigenvalues, with
 $- \infty < \gl _1 \leq \gl _2 \leq \dots $, $\lim _{j \to + \infty} \gl _j = + \infty $, and a corresponding sequence of eigenfunctions $\{ W^j \}$ that weakly solve the systems
 \be \label{sistemaautovalori}
 \begin{cases} - \gD W^j + C  W^j =   \gl _j W^j &  \text{ in }  \gO \\
  W^j =0 & \text{ on } \de \gO
 \end{cases}
 \ee
i.e. if $W^j=(w_1, \dots , w_m)$
$$
\begin{cases}
 - \gD w_1 +  c_{11} w_1+  \dots   +c_{1m} w_m &= \gl _j  w_1 \\
  \dots  \dots & \dots \\ 
 - \gD w_m +  c_{m1} w_1+  \dots   +c_{mm} w_m &=\gl _j  w_m
\end{cases}
$$ 
Moreover by (scalar) elliptic regularity theory applied iteratively to each equation, the eigenfunctions $W^j $ belong at least to $  C^{1}(\gO;\Rm)$.\\
We now  collect in the next proposition  the variational formulation and some properties of eigenvalues and eigenfunctions.\\
 In what follows if $\gO '$ is a subdomain of $\gO $ we denote by $\gl _k (\gO ')$ the eigenvalues of the same system with $\gO $ substituted by $\gO '$.
 
\begin{proposition}\label{varformautov} Suppose that $C=(c_{ij})_{i,j=1}^m $ satisfies \eqref{ipotesiC}, and let $\{\gl _j \}$, $\{ W^j \}$ be the sequences of eigenvalues and eigenfunctions that satisfy \eqref{sistemaautovalori}.\\
Define the Rayleigh quotient
\be R(V)= \frac {B(V,V)}{(V,V)_{\ldue}} \quad \text{ for } V \in \huno \quad V \neq 0
\ee
with $B(.,.)$ as in \eqref{bilinearform}
Then the following properties hold, where  $\textbf{V}_k$ denotes a $k$-dimensional subspace of $\huno$ and the orthogonality conditions  $V \bot W_k $  or  $V \bot \textbf{V}_k $  stand for the orthogonality in $\ldue$.  
\begin{itemize}
\item[i)] $\gl _1 = \text{ min } _{V\in \huno \,, \, V \neq 0} R(V) =  \text{ min } _{V\in \huno \, , \,  (V,V)_{\ldue}=1 } B(V,V) $
\item[ii)]   $\gl _m = \text{ min } _{V\in \huno \,, \, V \neq 0 \, , \, V \bot W_1,\dots , V \bot W_{m-1}} R(V)$\\ $ =  \text{ min } _{V\in \huno \, , \,  (V,V)_{\ldue}=1 \, , \, V \bot W_1,\dots , V \bot W_{m-1} } B(V,V) $  if $m \geq 2$ 
\item[iii)] $\gl _m = \text{ min } _{\textbf{V}_m }   \text{ max } _{ V \in \textbf{V}_m \, , \, V \neq 0 } R(V) $
\item[iv)] $\gl _m = \text{ max } _{\textbf{V}_{m-1} }   \text{ min } _{ V \bot  \in \textbf{V}_{m-1 }\, , \, V \neq 0 } R(V) $
\item[v)] If $W \in \huno$, $W \neq 0$, and $R(W)= \gl _1$, then $W$ is an eigenfunction corresponding to $\gl _1$.
\item[vi)] $ \lim _{\text{ meas }(\gO ') \to 0} \gl _1 (\gO ') = + \infty $
\item[vii)] If the system is cooperative in $\gO $ and $W $ is a first eigenfunction, then $W^{+}$ and $W^{-}$ are eigenfunctions, if they do not vanish.
\item[viii)] If the system is fully coupled in $\gO $,  then the first eigenfunction does not change sign in $\gO $ and  the first eigenvalue is simple, i.e. up to scalar multiplication there is only one eigenfunction corresponding to the first eigenvalue. 
\item[ix)] Assume that the system is fully coupled in $\gO $,  $C'  = \left (  c'_{ij} \right )_{i,j=1}^m $ is another matrix that satisfies \eqref{ipotesiC}, and let $\{{ \gl '} _k  \}$, 
 be the sequence of eigenvalues  of  the corresponding system.
If $c_{ij} \geq  {c'} _{ij}  $ for any $i,j=1, \dots ,m$ then  $ \gl _1 \geq { \gl '} _1$.
\end{itemize}
\end{proposition}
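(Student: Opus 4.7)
The plan is to deduce (i)--(iv) from the standard spectral theory for the compact, positive, selfadjoint operator $T:\ldue\to\ldue$ already constructed in the text. The Courant--Fischer formulas for $T$, or equivalently for the bilinear form $B^{\gL}$ on $\huno$, translate into the claimed characterizations through the identity $B^{\gL}(V,V)/(V,V)_{\ldue}=R(V)+\gL$ and the substitution $\gl_j=1/\gm_j^{\gL}-\gL$; the sequence of eigenfunctions $\{W^j\}$ is orthonormal in $\ldue$ because $T$ is selfadjoint.

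For (v) I would take any $W\in\huno$ with $R(W)=\gl_1$ and differentiate $t\mapsto R(W+t\Phi)$ at $t=0$ for arbitrary $\Phi\in\huno$; the stationarity condition is $B(W,\Phi)=\gl_1(W,\Phi)_{\ldue}$, which is the weak form of \eqref{sistemaautovalori}. For (vi), split $B=\int|\nabla V|^2+\int C(V,V)$; the bound $|\int_{\gO'}C(V,V)|\le M(V,V)_{\ldue}$ with $M=m\max_{i,j}\|c_{ij}\|_\infty$ together with the scalar Faber--Krahn inequality applied componentwise gives $\gl_1(\gO')\ge\gl_1^{-\gD}(\gO')-M\to+\infty$ as $|\gO'|\to 0$, via (i).

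For (vii), expand $W=W^+-W^-$ componentwise: $w_iw_j=w_i^+w_j^++w_i^-w_j^--w_i^+w_j^--w_i^-w_j^+$, use $w_i^+w_i^-\equiv 0$ and the cooperativity $c_{ij}\le0$ for $i\ne j$ to get
\[
B(W,W)\ge B(W^+,W^+)+B(W^-,W^-),
\]
while $(W,W)_{\ldue}=(W^+,W^+)_{\ldue}+(W^-,W^-)_{\ldue}$. Since $R(W)=\gl_1$ and $R(W^\pm)\ge\gl_1$ whenever non-zero, equality is forced everywhere and both $W^+$ and $W^-$ (if non-trivial) attain the minimum in (i), hence by (v) are eigenfunctions. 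For (viii), apply Theorem~\ref{SMP} to the eigenfunctions $W^+$ and $W^-$ produced by (vii): full coupling forces each of them to be either identically zero or componentwise strictly positive in $\gO$; since $W^+$ and $W^-$ have disjoint supports, one must vanish, so $W$ does not change sign. Simplicity then follows by the familiar trick: given two linearly independent first eigenfunctions, by the preceding step we may take them both strictly positive after choice of sign, then choose $\gb\in\R$ so that $W-\gb W'\ge0$ with a zero somewhere in $\gO$; Theorem~\ref{SMP} forces $W-\gb W'\equiv0$, contradicting independence.

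For (ix), let $W$ be the strictly positive first eigenfunction of the unprimed system, which exists by (viii). Since $w_iw_j>0$ in $\gO$ and $c_{ij}\ge c'_{ij}$, one has $B'(W,W)\le B(W,W)$, and testing the primed Rayleigh quotient at $W$ yields $\gl'_1\le R'(W)\le R(W)=\gl_1$. The main obstacle will be (viii): the interplay between the variational information from (vii) and the strong maximum principle/Hopf lemma of Theorem~\ref{SMP} has to be handled carefully, first to rule out sign change of the first eigenfunction and then to promote this into simplicity; every other item reduces either to a direct Hilbert-space computation or to a test-function argument built on that step.
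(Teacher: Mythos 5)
Your proof is correct and follows essentially the same route as the paper: standard Courant--Fischer theory for the compact selfadjoint operator $T$ for (i)--(vi), a test-function/variational argument for (vii)--(viii) combined with the strong maximum principle of Theorem~\ref{SMP}, and testing the primed Rayleigh quotient at the positive first eigenfunction for (ix). The one mild variant is in (vii): you expand the quadratic form and establish the superadditivity $B(W,W)\ge B(W^+,W^+)+B(W^-,W^-)$ directly, whereas the paper multiplies the eigenvalue system by $W^+$ and integrates; both lead to $R(W^\pm)\le\gl_1$ and then invoke (v). One small caveat on (viii): in the vector-valued setting $W^+$ and $W^-$ need not have disjoint supports (e.g.\ $w_1>0$ and $w_2<0$ may coexist on a common set), so the correct phrasing is that each component pair $w_i^+,w_i^-$ has disjoint support; that is still enough, because if both $W^+$ and $W^-$ were nontrivial eigenfunctions then Theorem~\ref{SMP} would force $w_i^+>0$ and $w_i^->0$ simultaneously throughout $\gO$, a contradiction. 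For simplicity of $\gl_1$, the paper's observation that two sign-definite first eigenfunctions cannot be $\ldue$-orthogonal is a touch shorter than your $W-\gb W'$ sliding argument, which in addition requires the Hopf boundary information from Theorem~\ref{SMP} to conclude; both are sound.
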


\begin{proof} As before let us consider for   $\gL \geq 0 $ the bilinear form $ B_{\gL }(V,V)= B(V,V) + \gL (V,V)_{\ldue}$, which is an equivalent scalar product in $\huno$, and  define $ R_{\gL}(V)= \frac {B_{\gL }(V,V)}{(V,V)_{\ldue}} \quad \text{ for } V \in \huno \quad V \neq 0 $. 
  Since  $  R_{\gL}(V) = R(V) + \gL $, once the properties are proved for  $  B_{\gL}(V) $ (which is an equivalent scalar product in $\huno $)  and its eigenvalues $\gL _j^{\gL}$, we recover the results stated by translation. Therefore for simplicity of notations we assume from the beginning that $\gL =0$ i.e. that $B(.,.)$ is an equivalent scalar product in $\huno $.\\
The proofs of i) ,\dots , vi) do not depend on cooperativeness and they are the same as the standard proofs in the scalar case (see \cite{E}, \cite{K}), so we only sketch them.\\
i) \ The sequence $\{ W_j \}$ is an orthonormal basis of $\ldue  $, and  since $B(W_k,W_j)= \gL _k (W_k,W_j)_{\ldue}$ (in particular $=0$ if $k\neq j $), the sequence  
$\{ (\gL _j ) ^{-\frac 12}\, W_j \}$  is an orthonormal basis of $\huno   $. It follows that if $U = \sum _{k=1}^{\infty } d_k W_j $ is the Fourier expansion of a function $U$ in 
$\ldue $, the series converges to $U$ in $\huno $ as well. 
If now $(U,U)_{\ldue} = \sum _{k=1}^{\infty } d_k^2 =1$, then 
$B(U,U)= \sum _k \gl _k d_k ^2 \geq \gl _1 \sum _k d_k ^2= \gl _1 $ and i) follows.\\
ii) \ If $ V \bot W_1,\dots , W_{m-1} $ and $(V,V)_{\ldue}=1)$, then $V = \sum _{k=m}^{\infty } d_k W_j $ and as before $B(V,V) \geq \gl _m$ and since $B(W_m, W_m)= \gl _m$ ii) follows.\\
iii) \ If $\text{ dim }(\textbf{V}_m) =m $ and $\{ V_1, \dots , V_m \}$ is a basis of $\textbf{V}_m  $, there exists a linear combination $0 \neq V=\sum _{i=1}^m \ga _i V_i$ which is orthogonal to $W_1$, \dots $W_{m-1}$ ($m$ coefficients and $m-1$ unknown), so that by ii) we obtain that
$ \text{ max } _{ V \in \textbf{V}_m \, , \, V \neq 0 } R(V) \geq \gl _m$. On the other hand if $\textbf{V}_m= \text{ span }(W_1, W_m)$ then 
$ \text{ max } _{ V \in \textbf{V}_m \, , \, V \neq 0 } R(V) \leq  \gl _m$, so that iii) follows.\\
iv) \  The proof  is similar. If $\{ V_1, \dots , V_{m-1}\} $ is a basis of an $m-1$-dimensional subspace $\textbf{V}_{m-1}$, there exists a linear combination
$0 \neq W=\sum _{i=1}^m \ga _i W_i$ of the first $m$ eigenfunctions which is orthogonal to $\textbf{V}_{m-1}$, and $R(W,W) \leq \gl _m $.
So  $\text{ min } _{ V \bot  \in \textbf{V}_{m-1 }\, , \, V \neq 0 } R(V) \leq \gl _m $, but taking $\textbf{V}_m= \text{ span }(W_1, W_{m-1})$ then  
$\text{ min } _{ V \bot  \in \textbf{V}_{m-1 }\, , \, V \neq 0 } R(V) \geq \gl _m $, so that iv) follows.\\
v) \ By normalizing we can suppose that $(W,W)_{\ldue}=1$. Let $V \in \huno $, $t>0$. Then by i) $R(W+tV)= \frac { B(W+tV,W+tv)}{(W+tV)_{\ldue}}\geq \gl _1$, i.e.
$B(W,W)+ t^2 B(V,V) + 2t B(W,V) \geq \gl _1 \left [ (W,W)_{\ldue}  + t^2 (V,V)_{\ldue}  + 2t (W,V)_{\ldue}  \right ] = \gl _1 + \gl _1 t^2 (V,V) + 2t \gl _1 (W,V)$. Since $B(W,W) = \gl _1$, dividing by $t$ and letting $t \to 0$ we obtain that $B(W,V) \geq \gl _1 (W,V)_{\ldue} $ and changing $V$ with $-V$ we deduce that 
$B(W,V) = \gl _1 (W,V)_{\ldue} $ for any $V \in \huno $, i.e. $W$ is a first eigenfunction. \\
vi) \  If $V \in \textbf{H}_0^1 (\gO ')$ there exists $C \geq 0$ such that \par 
$B_{\gO '} (V,V) \geq \int _{\gO '} |\nabla V |^2 \, dx - C  \int _{\gO '} |V|^2 \, dx $ , \par while by  Poincar\'e' s inequality  \par 
$\int _{\gO '}|V|^2 \, dx \leq C' |\gO '| ^{\frac 2N} \int _{\gO '} |\nabla V |^2 \, dx $,  so that \par 
 $R_{\gO '}(V)= \frac {B_{\gO '} (V,V)}{\int _{\gO '}|V|^2} \geq \frac {  \int _{\gO '} |\nabla V |^2 \, dx } {  \int _{\gO '} |V|^2 \, dx }- C \geq 
\frac 1{C' |\gO '|^ {\frac 2N}} -C \to + \infty $ if $|\gO ' | \to 0 $.\\
vii) \ Multiply  \eqref{sistemaautovalori} by $W^{+}=(w_1^{+}, \dots , w_m^{+}) $ and integrate. If in the $i$-th  equation, multiplied by $w_i^{+}$ we write $w_j= w_j^{+}-w_j^{-}$ for $j\neq i$ and recall that by cooperativeness  $ - c_{1j}w_j^{-}w_1^{+} \geq 0 $, we deduce that 
$B(W^{+},W^{+})\leq \gl _1 (W^{+}, W^{+})$, so that by v) $W^{+}$ is a first eigenfunction. The same applies to  $W^{-}$ \\
viii) \ The conclusion follows from the strong maximum principle, which is valid under the fully coupling hypothesis. In fact  if $W^{+}$ does not vanish, it is a first eigenfunction by vii), and by the strong maximum principle (Theorem \ref{SMP}) is strictly positive in $\gO$, i.e. $W >0 $ in $\gO $ if it is positive somewhere.\\
If $W^1$, $W^2$ are two eigenfunctions corresponding to $\gl _1$, they do not change sign in $\gO $, so that they can not be orthogonal in $\ldue $. This implies that the first eigenvalue is simple.\\
ix) Let $W^1=(w_1, \dots , w_m )$ the first eigenfunction for the system \eqref{sistemaautovalori}. Since the system is fully coupled, $W^1$ does not change sign, and by normalizing it, we can assume that  $(W^1, W^1)_{\ldue}=1$. \\
Denoting by $B '$ the bilinear form corresponding to the matrix $ C' $, since  $w_i  w_j \geq 0 $ and $c_{ij} \geq  {c'}_{ij}  $ we get that
\begin{multline}
 \gl _1 = B(W^1,W^1)=  \int _{\gO} \left [\sum_{i=1}^m  |\nabla w_i|^2   +\sum _{i,j=1}^m c_{ij} w_i  w_j \right ] \, dx  \geq \\
\int _{\gO} \left [\sum_{i=1}^m  |\nabla w_i|^2   +\sum _{i,j=1}^m  {c'}_{ij} w_i  w_j \right ] \, dx = B' (W^1,W^1) \geq {\gl _1}'
\end{multline}

\end{proof}
 \subsection{Weak Maximum Principles for Cooperative systems}
 Let us turn back to the (possibly) nonsymmetric  cooperative system  \eqref{linearsystem}:\\
 $$ \begin{cases} - \gD U + D(x) U = F \quad &\text{in } \gO \\
U= 0 \quad & \text{on } \de  \gO
\end{cases}
  $$ 
where the  matrix $D=(d_{ij})_{i,j=1}^m$  satisfies 
\be \label{ipotesiDcooperativa}
d_{ij} \in L^{\infty} (\gO) \quad, \quad   d_{ij} \leq 0 \, , \quad \text{whenever } i \neq j 
\ee
In the sequel we shall indicate by $ \gl _j^{\text{(s)}}=  \gl _j^{\text{(s)}}(-\gD +D; \gO )    $ the  eigenvalues of the associated symmetric system: \\
 \eqref{symmetricsystem} \ \ 
$ \begin{cases} - \gD U + C(x) U = F \quad &\text{in } \gO  \\
U= 0 \quad & \text{on } \de  \gO 
\end{cases}
 $ \\
 where \ 
 $C=  \frac 12 (D + D^{t}) \;  \text { i.e. } \;    c_{ij} =  \frac 12 \, (d_{ij} + d_{ji}) $. \\
 Analogously the corresponding eigenfunctions will be indicated by $W_j^{\text(s)}$.\\
 We also  denote the bilinear form associated with the symmetric system \eqref{symmetricsystem} by
   $$ B^{\text{s}}(U,\Phi)= \int _{\gO} \left [ \nabla U \cdot \nabla \Phi + C(U, \Phi) \right ] = \int _{\gO} \left [\sum_{i=1}^m  \nabla u_i \cdot \nabla \phi _i +\sum _{i,j=1}^m c_{ij}u_i \phi _j \right ]  
 $$
  As already remarked, the quadratic form \eqref{formaquadraticalineari} associated to the system \eqref{linearsystem} coincides with that associated to  system \eqref{symmetricsystem}, i.e. 
$$ Q (\Psi ; \gO  ) = \int _{\gO} |\nabla \Psi |^2 + D(x) (\Psi ,\Psi ) = B^{\text{s}}(\Psi,\Psi)
$$ 
  if   $\Psi \in  H_0^1 (\gO;\Rm)$. \par 
\medskip
\begin{definition} We say that the maximum principle holds for the operator $- \gD + D$ in an open set  $\gO ' \subseteq \gO $ if  any $U \in \textbf{H}^1 (\gO ')$ such that 
\begin{itemize}
\item $U \leq  0 $ on $ \de \gO ' $ (i.e. $U^{+} \in \textbf{H}_0^1 (\gO ')$ ) 
\item  $- \gD U + D(x)U \leq 0 $ in $\gO ' $ (i.e. 
$\int \nabla U \cdot  \nabla \Phi + D(x) (U, \Phi ) \leq 0 $
for any nonnnegative  $\Phi \in \textbf{H}_0^1 (\gO ') $ )
\end{itemize}
 satisfies $U \leq 0 $ a.e. in $\gO $.
\end{definition}
Let us denote by $  \gl _j^{\text{(s)}}  (\gO ') >0  $ the sequence of the eigenvalues of the symmetric system in an open set $\gO ' \subseteq \gO$.
\begin{theorem}\label{WMP} [Sufficient conditions for weak maximum principle] Under the hypothesis \eqref{ipotesiDcooperativa}, if
 $  \gl _1^{\text{(s)}}  (\gO ') >0  $ then the maximum principle holds for $- \gD + D$ in $\gO ' \subseteq \gO $. \\
\end{theorem}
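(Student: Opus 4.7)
The strategy is to test the differential inequality against the componentwise positive part $U^+$ and reduce the statement to the Rayleigh characterization of $\gl_1^{\text{(s)}}(\gO')$ provided by item i) of Proposition \ref{varformautov}. Since $U \leq 0$ on $\partial\gO'$ by hypothesis, each $(u_i)^+$ belongs to $H_0^1(\gO')$, so $U^+$ is a nonnegative element of $\textbf{H}_0^1(\gO';\Rm)$ and hence an admissible test function in the weak inequality $-\gD U + D(x) U \leq 0$.

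Plugging it in, the componentwise identity $\int \nabla u_i \cdot \nabla u_i^+ = \int |\nabla u_i^+|^2$ collapses the gradient pairing to $\int_{\gO'}|\nabla U^+|^2$. For the zero-order term I would write $U = U^+ - U^-$ and expand
\[
D(x)(U, U^+) = D(x)(U^+, U^+) - \sum_{i,j} d_{ij}(x)\, u_i^- u_j^+.
\]
On the diagonal the products $u_i^- u_i^+$ vanish, and off the diagonal the cooperativeness hypothesis \eqref{ipotesiDcooperativa} gives $d_{ij} \leq 0$ while $u_i^- u_j^+ \geq 0$, so the correction term has the favorable sign, yielding $D(x)(U, U^+) \geq D(x)(U^+, U^+)$.

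Since the quadratic form of any matrix acting on a single vector depends only on its symmetric part, $D(x)(U^+, U^+) = C(x)(U^+, U^+)$, and combining the above gives $B^{\text{s}}(U^+, U^+) \leq 0$ on $\gO'$. Applying Proposition \ref{varformautov}, i) on $\gO'$ then yields
\[
\gl_1^{\text{(s)}}(\gO')\, \|U^+\|_{\ldue}^2 \leq B^{\text{s}}(U^+, U^+) \leq 0,
\]
and the hypothesis $\gl_1^{\text{(s)}}(\gO')>0$ forces $U^+ \equiv 0$, i.e.\ $U \leq 0$ a.e.\ in $\gO'$.

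The only delicate point is the sign analysis of the cross terms $d_{ij} u_i^- u_j^+$: this is exactly where cooperativeness is essential, and the argument would fail for a generic (non-cooperative) coupling. Everything else is a straightforward application of the symmetric spectral theory established earlier in this section, plus the elementary fact that passing from $D$ to its symmetrization $C = \tfrac12(D + D^t)$ does not change the associated quadratic form.
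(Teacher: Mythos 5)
Your proof is correct and follows essentially the same route as the paper's: test the weak inequality with $U^+$, use cooperativeness ($d_{ij}\leq 0$, $i\neq j$) to discard the nonpositive cross terms $d_{ij}u_i^-u_j^+$, observe that the remaining quadratic form coincides with the symmetric one $B^{\text{s}}(U^+,U^+)$, and conclude $U^+\equiv 0$ from the Rayleigh characterization of $\gl_1^{\text{(s)}}(\gO')>0$. The only difference is expository: you package the cross-term estimate as a single clean inequality $D(x)(U,U^+)\geq D(x)(U^+,U^+)$ and explicitly invoke the identity $D(U^+,U^+)=C(U^+,U^+)$, whereas the paper performs the decomposition equation by equation; the substance is identical.
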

\begin{proof} By the variational characterization of eigenvalues \\
$\gl _1 ^{\text{(s)}} = \text{ min } _{V\in \textbf{H}^1 (\gO ') \,, \, V \neq 0} R(V) > 0 $, so that $Q(V)=B^{\text{s}}(V,V) >0 $ for any $V \neq 0 $ in $ \textbf{H}_0^1 (\gO ') $.
Suppose that $U \leq  0 $ on $ \de \gO ' $, $- \gD U + D(x)U \leq 0 $ in $\gO ' $. Then, testing the equation with $U^{+}=(u_1^{+},\dots , u_m^{+})$, writing in the $i$-th equation 
$u_j= u_j^{+} - u_j^{-}$ for $i \neq j $, and recalling that $-c_{ij}u_i^{+} u_j^{-} \geq 0 $ if $i\neq j$, we obtain that 
$B^{\text{s}}(U^{+},U^{+}) \leq 0 $, which implies $U^{+} \equiv 0 $ in $ \gO ' $.
\end{proof}
 
 Almost immediate consequences of the previous theorem are the  following ''Classical'' and ''Small measure'' forms of the weak maximum principle (see   \cite{BS2}, \cite{deFM},  \cite{PrW}, \cite {Si}).

\begin{corollary}  
\begin{itemize}
\item[i) ] If \eqref{weaklycoupled} holds and  $D$ is a.e. nonnegative definite in $\gO ' $ then the maximum principle holds for
$- \gD + D$ in $ \gO ' $. 
\item[ii) ]
There exists $\gd >0 $, depending on $D$, such that for any subdomain $\gO ' \subseteq \gO $ the maximum principle holds for $- \gD + D$ in $\gO ' \subseteq \gO $ provided $|\gO ' | \leq \gd$. 
\end{itemize}
\end{corollary}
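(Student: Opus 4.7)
The plan is to reduce both parts to Theorem \ref{WMP}: in each case it suffices to verify that $\gl_1^{\text{(s)}}(\gO')>0$, whence the maximum principle for $-\gD + D$ in $\gO'$ follows immediately.

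For part (i), the key observation is that the matrix $D(x)$ and its symmetric part $C(x)=\frac12(D(x)+D^t(x))$ give rise to the \emph{same} quadratic form on $\Rm$, since
\[
\sum_{i,j=1}^m d_{ij}(x)\xi_i\xi_j \;=\; \sum_{i,j=1}^m \tfrac12\bigl(d_{ij}(x)+d_{ji}(x)\bigr)\xi_i\xi_j \;=\; \sum_{i,j=1}^m c_{ij}(x)\xi_i\xi_j.
\]
Hence $D(x)$ being a.e.\ nonnegative definite in $\gO'$ is equivalent to $C(x)$ being a.e.\ nonnegative definite there. Consequently, for every $V \in \huno(\gO';\Rm)$ with $V \neq 0$,
\[
B^{\text{s}}(V,V) \;=\; \int_{\gO'} |\nabla V|^2\,dx + \int_{\gO'} C(x)(V,V)\,dx \;\geq\; \int_{\gO'} |\nabla V|^2\,dx \;\geq\; \gl_1(-\gD,\gO')\,\|V\|_{\ldue}^2,
\]
where $\gl_1(-\gD,\gO')>0$ denotes the first Dirichlet eigenvalue of the scalar Laplacian on $\gO'$. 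Dividing by $\|V\|_{\ldue}^2$ and using the Rayleigh-quotient characterization i) of Proposition \ref{varformautov}, one obtains $\gl_1^{\text{(s)}}(\gO') \geq \gl_1(-\gD,\gO')>0$, and Theorem \ref{WMP} concludes the argument.

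Part (ii) is then an essentially immediate application of property vi) of Proposition \ref{varformautov}: since $\gl_1^{\text{(s)}}(\gO') \to +\infty$ as $|\gO'| \to 0$, with a rate that depends only on the $L^{\infty}$-bound of the entries of $C$ (and hence of $D$), there exists $\gd>0$ such that $|\gO'| \leq \gd$ forces $\gl_1^{\text{(s)}}(\gO')>0$; Theorem \ref{WMP} again yields the conclusion. I do not foresee any real obstacle: both items are straightforward corollaries once the pointwise identification of the two quadratic forms---already remarked in the paragraph preceding Proposition \ref{varformautov}---is in hand, and the spectral facts collected in that proposition supply the rest.
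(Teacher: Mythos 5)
Your proof is correct and follows essentially the same route as the paper's: reduce to Theorem \ref{WMP} by showing $\gl_1^{\text{(s)}}(\gO')>0$, using in part (i) that nonnegative definiteness of $D$ (equivalently of its symmetric part $C$) gives $B^{\text{s}}(V,V)\geq\int_{\gO'}|\nabla V|^2$, and in part (ii) invoking Poincar\'e via Proposition \ref{varformautov} vi). The only difference is that you spell out the Rayleigh-quotient comparison with the scalar Dirichlet eigenvalue $\gl_1(-\gD,\gO')$, which the paper leaves implicit.
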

\begin{proof} i) \  If the matrix $D$ is  nonnegative definite then $B^{\text{s}}(\Psi , \Psi ) \geq \int _{\gO } |\nabla \Psi |^2 $, which implies that the first symmetric eigenvalue is positive, so that by Theorem \ref{WMP} we get i).\\   
ii) It is a consequence of the Poincar\' e 's inequality through vi) of  Proposition \ref{varformautov}. 
\end{proof}
 
Obviously the converse of Theorem \ref{WMP} holds if $D=C$ is symmetric: if the maximum principle holds for $- \gD + C$ in $\gO '$ then $\gl _1^{\text{(s)}} (\gO ' )>0$.
In fact if $\gl _1^{\text{(s)}} (\gO ' )\leq 0 $ since the system is cooperative (and symmetric) there exists a nontrivial nonnegative first eigenfunction $\Phi _1 \geq 0 $, 
$\Phi \not \equiv 0$, and  the maximum principle does not hold, since $- \gD \Phi _1 + C\, \Phi _1 = \gl _1 \Phi _1 \leq 0 $ in $\gO '$, $\Phi _1 =0 $ on $\de \gO '$, 
while  $\Phi _1 \geq 0 $ and
$\Phi _1 \neq 0$.\\
However this is not true for general nonsymmetric systems. Roughly speaking the reason is that there is an equivalence between the validity of the maximum principle for the operator $- \Delta + D$ and the positivity of its  principal eigenvalue $\tilde {\gl _1}$, whose definition  is given below, and  the inequality 
$\tilde {\gl _1} (\gO ') \geq    \gl _1^{\text{(s)}}  (\gO ' )  
$,  which can be  strict, holds. \par  
\medskip

More precisely we recall  that the \emph{principal eigenvalue}  of the operator \\
 $- \gD + D$ in an open set $\gO ' \subseteq \gO $ is defined as  
\be
\begin{split}
 \tilde {\gl _1} (\gO  ') &= 
   \sup \{ \gl \in \R : \exists \, \Psi \in W^{2,N}_{loc} (\gO ' ; \Rm) \; \text { s.t. } \\ 
&      \Psi  >0   - \gD \Psi + D(x) \Psi - \gl \Psi \geq 0 \text{ in } \gO '\}
\end{split}
\ee
(see \cite{BS2} and the references therein, and also \cite{BNV} for the case of scalar equations).\\
We then have:
\begin{proposition}\label{principaleigenvalue} Suppose that the system \eqref{linearsystem} is fully coupled in an open set  $\gO ' \subseteq \gO $. Then:
\begin{itemize}
\item[i) ] there exists a positive eigenfunction $\Psi _1 \in  W^{2,N}_{loc} (\gO ' ; \Rm)$ which satisfies  
\be \label{autofprincipale}   - \gD \Psi _1 + D(x) \Psi _1 = \tilde {\gl _1 } (\gO ') \Psi _1 \text { in } \gO\, , \quad \Psi _1 >0    \text { in }  \gO ' \, , \quad  \Psi _1 =0    \text { on } \de \gO ' 
\ee
Moreover the principal eigenvalue is simple, i.e. any function that satisfy \eqref{autofprincipale} must be a multiple of $\Psi _1$.
\item[ii) ]  the maximum principle holds for the operator $- \gD + D$ in $\gO '$ if and only if $\tilde {\gl _1} (\gO ' ) >0$
\item[iii) ] if there exists a positive function $\Psi \in  W^{2,N}_{loc} (\gO ' ; \Rm) $ such that   $\Psi  >0 , \,  - \gD \Psi + D(x) \Psi  \geq 0 $ in $\gO '$, then either 
$\tilde {\gl _1} (\gO  ') >0$ or $\tilde {\gl _1} (\gO ') =0 $ and $\Psi = c\, \Psi _1$ for some constant $c$.
\item[iv) ]  $\tilde {\gl _1} (\gO ' ) \geq  \gl _1^{\text{(s)}}  (\gO ' ) $, with equality if and only if $\Psi _1 $ is also the first eigenfunction of the symmetric operator $- \gD + C$ in $\gO '$ ,  $ C=\frac 12 (D + D^{\text{t}})$.
If  this is the case the equality $C(x) \Psi _1 =D(x) \Psi _1$ holds and,  if $m=2$, this implies that $d_{12}= d_{21}$.
\end{itemize}
\end{proposition}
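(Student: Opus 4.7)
The plan is to establish i), ii), and iii) by invoking the principal eigenvalue theory for cooperative fully coupled linear elliptic systems developed by Birindelli--Sirakov \cite{BS2}, which extends the Berestycki--Nirenberg--Varadhan framework \cite{BNV} from the scalar case. Under the full coupling hypothesis, \cite{BS2} produces a positive principal eigenfunction $\Psi_1$ satisfying \eqref{autofprincipale} together with its simplicity, which is i); establishes the equivalence of the weak maximum principle with $\tilde{\gl _1}(\gO')>0$, which is ii); and yields the supersolution dichotomy in iii) via a standard contact-point / Hopf argument comparing $\Psi$ with the rescalings $t\Psi_1$. I would quote the relevant statements of \cite{BS2} and sketch the contact-point argument for iii).

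For part iv), the proof hinges on the elementary algebraic identity
\[
D(x)(V,V) = \sum_{i,j=1}^m d_{ij}(x)\, v_i v_j = \sum_{i,j=1}^m c_{ij}(x)\, v_i v_j = C(x)(V,V),
\]
valid for every $V=(v_1,\dots,v_m)$, which just reflects the fact that the antisymmetric part of $D$ contributes nothing when paired against the symmetric tensor $v_i v_j$. Using $\Psi_1$ as a test function in the weak formulation of \eqref{autofprincipale} (admissible because $\Psi_1 \in \huno(\gO';\Rm)$ by the regularity in i)) then yields
\[
B^{\text{s}}(\Psi_1,\Psi_1) = \int_{\gO'} \left[ |\nabla \Psi_1|^2 + D(x)(\Psi_1,\Psi_1) \right] dx = \tilde{\gl _1}(\gO')\, \|\Psi_1\|_{\ldue}^2,
\]
whence by the Rayleigh characterization in Proposition \ref{varformautov} i),
\[
\gl _1^{\text{(s)}}(\gO') \leq \frac{B^{\text{s}}(\Psi_1,\Psi_1)}{\|\Psi_1\|_{\ldue}^2} = \tilde{\gl _1}(\gO').
\]

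For the equality case, if $\tilde{\gl _1}(\gO')= \gl _1^{\text{(s)}}(\gO')$ then $\Psi_1$ attains the infimum of the Rayleigh quotient for $B^{\text{s}}$, and Proposition \ref{varformautov} v) forces it to be a first eigenfunction of $-\gD + C$. Conversely, if $\Psi_1$ is also a first eigenfunction of $-\gD + C$ (necessarily with eigenvalue $\gl _1^{\text{(s)}}$, since a positive eigenfunction must correspond to $\gl _1^{\text{(s)}}$ by Proposition \ref{varformautov} viii)), subtracting the two eigenvalue equations gives $(D-C)\Psi_1 = (\tilde{\gl _1} - \gl _1^{\text{(s)}})\Psi_1$, and pairing with $\Psi_1$ together with the identity above forces $\tilde{\gl _1} = \gl _1^{\text{(s)}}$. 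In either case $D(x)\Psi_1 = C(x)\Psi_1$ a.e.\ in $\gO'$. When $m=2$ the matrix $D-C$ is antisymmetric with off-diagonal entries $\pm\tfrac{1}{2}(d_{12}-d_{21})$, so the identity $(D-C)\Psi_1=0$, combined with the strict positivity of both components of $\Psi_1$ (from the strong maximum principle, Theorem \ref{SMP}, under full coupling), forces $d_{12}=d_{21}$ a.e. The main obstacle is not conceptual but technical: verifying that the regularity of $\Psi_1$ suffices for integration by parts against itself, and matching normalizations with \cite{BS2}. The conceptual heart is the identity $\int D(V,V) = \int C(V,V)$, which is what makes the non-selfadjoint principal eigenvalue compatible with the Rayleigh characterization of the symmetrized operator.
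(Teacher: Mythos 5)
Your proof is correct and follows essentially the same route as the paper: cite \cite{BS2} for i)--iii), then for iv) test $\Psi_1$ against the quadratic form, use the identity $D(V,V)=C(V,V)$ together with the Rayleigh characterization and Proposition \ref{varformautov} v) to get the inequality and the equality dichotomy, and deduce $D\Psi_1=C\Psi_1$ and $d_{12}=d_{21}$ from positivity of $\Psi_1$. The extra detail you supply (the converse direction via viii) and the explicit antisymmetry computation for $m=2$) is consistent with, and slightly more explicit than, the paper's argument.
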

\begin{proof} We refer to \cite{BS2}  for the proofs of i) --  iii). For what concerns iv) we observe that evaluating the quadratic form $Q(\Psi ; \gO ' )$ on $\Psi _1$ and recalling that it coincides with the quadratic form associated with the symmetric operator $- \gD + C$ ,  $ C=\frac 12 (D + D^{\text{t}})$ we obtain that  \\
$\tilde {\gl _1} (\gO  ') = \frac {B^{\text{s}}(\Psi _1 , \Psi _1 )}{(\Psi _1 , \Psi _1)_{\ldue}} \geq  \gl _1^{\text{(s)}}  (\gO ')  $ with equality if and only if $\Psi _1 $ is the first symmetric eigenfunction, by Proposition \ref{varformautov} v).
If this is the case, since $\Psi _1$ satisfies the system \eqref{autofprincipale} and the system \eqref{linearsystem}, the equality $D(x) \Psi _1 = C(x)\Psi _1$ follows. Since $\Psi _1 $ is positive, if $m=2$ the equality $d_{12}= d_{21} $ follows.
 \end{proof}

\section{  \textbf{Results on semilinear systems} }
 Let $\gO $ be a bounded domain in $\RN $, $F=(f_1, \dots , f_m ): \ov {\gO}\times \Rm \to \Rm $  a $C^{1, \alpha }$ function and  consider the semilinear elliptic system for the unknown vector valued function $U=(u_1, \dots ,u_m)$  in $\gO$ :
\begin{equation}  \label{semilinear system}
\begin{cases} - \gD U =F(x,U) \quad &\text{in } \gO \\
U= 0 \quad & \text{on } \de  \gO
\end{cases}
\end{equation}
i.e.
$$
\begin{cases}  
 - \gD u_1  = f_1(x,u_1,\dots , u_m) & \text{ in } \gO  \\
  \dots  \dots & \dots \\ 
 - \gD u_m = f_m(x,u_1,\dots , u_m)   & \text{ in } \gO \\
 u_1=  \dots = u_m  =0 & \text { on } \de \gO 
\end{cases}
$$
The system \eqref{modprob} is a particular case of this system, whith radial dependence on $x$.
\begin{definition} \label{semilinearicoupled}
\begin{itemize}
\item We say that the system \eqref{semilinear system} is cooperative or weakly coupled in an open set $\gO ' \subseteq \gO $ if 
$$  \frac {\de f _i} {\de u _j} (x, u_1, \dots , u_m) \geq 0 \quad \text{ for every } (x,u_1,\dots ,u_m ) \in  \gO ' \times \Rm
$$
and every  $i,j=1,\dots ,m$ with $i \neq j$.
 \item  We say that the system \eqref{semilinear system} is fully  coupled along a solution $U$ in  $\gO ' \subseteq \gO $  if it is cooperative in $\gO '$ and 
    in addition $\forall I,J \subset \{1, \dots ,m \}$ such that $I \neq \emptyset $, $J \neq \emptyset $, $I \cap J = \emptyset $, $I \cup J = \{1, \dots ,m \} $ there exist $i_0 \in I$, $j_0 \in J $ such that 
$$\text{meas }(\{ x \in \gO ' :   \frac {\de f _{i_0}} {\de u _{j_0}} (x, U(x)) > 0 \}) >0$$  
 \end{itemize}
\end{definition}

Let us recall the following definition, which makes sense also  in possibly unbounded domains.
\begin{definition}
\begin{itemize}
\item [i)]
Let $U$ be a $C^2 (\gO;\Rm)$ solution of \eqref{semilinear system}. We say that $U$ is linearized stable (or that has zero Morse index) if the quadratic form 
\begin{equation} \label{formaquadraticalinearizzato} 
\begin{split}
Q_U (\Psi ; \gO ) &= \int _{\gO} \left [ |\nabla \Psi |^2 - J_F (x, U)(\Psi ,\Psi )\right ] dx= \\
& \int _{\gO}\left [   \sum _{i=1}^m |\nabla \psi _i |^2 -\sum _{i,j=1}^m \frac {\de f_i}{\de u_j}(x, U(x)) \psi _i \psi _j \right ]  \, dx \geq 0
\end{split}
\end{equation}
for any $\Psi =(\psi _1, \dots , \psi _m) \in C_c^1 (\gO;\Rm)$ where $J_F (x,U)$ is the jacobian matrix of $F$ computed at $U$.
\item [ii)] $U$ has (linearized) Morse index  equal to the integer $\mu=\mu (U)\geq 1$ if $\mu $ is the maximal dimension of a subspace of  $ C_c^1 (\gO;\Rm)$ where the quadratic form is negative definite.
\item [iii)] $U$ has infinite (linearized) Morse index  if for any integer $k$ there is a $k$-dimensional subspace of  $ C_c^1 (\gO;\Rm)$ where the quadratic form is negative definite.
\end{itemize}
\end{definition}
As observed in Section 2, the quadratic form  $Q_U$ associated to the linearized operator at a solution $U$, i.e. to the linear operator
\be \label{linearizedoperator} L_U (V) = - \gD V - J_F (x, U) V 
\ee 
 coincides with the quadratic form corresponding to  the  selfadjoint operator 
 \be \label{symmetriclinearizedoperator} \tilde {L}_U (V) = - \gD V - \frac 12 \left (J_F (x, U) + J_F ^{\text{t}} (x, U)  \right )  V 
\ee 
where $J_F ^{\text{t}}$ is the transpose of the matrix $J_F $.\par 
 Hence  if $\gl _k $ and $W^k$ denote the \emph{symmetric} eigenvalues and eigenfunctions of $L_U$, i.e. $W^k$ satisfy  
 $$ 
  \begin{cases} - \gD W^k + C  W^k =   \gl _k  W^k &  \text{ in }  \gO \\
 W^k =0 & \text{ on } \de \gO \; , 
 \end{cases}
$$
where $C= c_{ij}(x)$,  $c_{ij}(x)= - \frac 12 \left [ \frac {\de f_i}{\de u_j}(x,U(x)) + \frac {\de f_j}{\de u_i}(x,U(x))\right ]$ ,  \\
 as in the scalar case we can prove the following

\begin{proposition} Let $\gO $ be a bounded domain in $\RN$. Then the Morse index of a solution $U$ to \eqref{semilinear system} equals the  number of negative symmetric eigenvalues of the linearized operator $L_U$.
\end{proposition}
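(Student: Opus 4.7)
The plan is to reduce the statement to the spectral theory of the selfadjoint operator $\tilde{L}_U$ developed in Section 2, exploiting the crucial fact (already observed in the text) that $Q_U(\Psi;\Omega) = B^{\mathrm{s}}(\Psi,\Psi)$, where $B^{\mathrm{s}}$ is the bilinear form associated with $\tilde{L}_U$. Let $\lambda_1 \leq \lambda_2 \leq \dots$ be the symmetric eigenvalues of $L_U$ (i.e. the eigenvalues of $\tilde{L}_U$) and let $\{W^k\}$ be a corresponding orthonormal basis of $\mathbf{L}^2(\Omega)$ made of eigenfunctions belonging to $\mathbf{H}_0^1(\Omega)\cap C^1(\Omega;\mathbb{R}^m)$, as produced in the discussion preceding Proposition \ref{varformautov}. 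Denote by $n \geq 0$ the number of indices $k$ with $\lambda_k < 0$.

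First I would prove that $\mu(U) \geq n$. If $n = 0$ there is nothing to show, so assume $n \geq 1$ and set $V_n = \mathrm{span}(W^1,\dots,W^n) \subset \mathbf{H}_0^1(\Omega;\mathbb{R}^m)$. For any nonzero $\Psi = \sum_{k=1}^n d_k W^k \in V_n$ the orthonormality in $\mathbf{L}^2$ and the identity $B^{\mathrm{s}}(W^k,W^j) = \lambda_k \delta_{kj}$ give
\[
Q_U(\Psi;\Omega) \,=\, B^{\mathrm{s}}(\Psi,\Psi) \,=\, \sum_{k=1}^n \lambda_k d_k^2 \,\leq\, \lambda_n \sum_{k=1}^n d_k^2 \,<\, 0.
\]
To move from $\mathbf{H}_0^1$ to $C_c^1$ (which is what enters the definition of Morse index), I would invoke the density of $C_c^1(\Omega;\mathbb{R}^m)$ in $\mathbf{H}_0^1(\Omega;\mathbb{R}^m)$ and the continuity of the quadratic form $Q_U$ on $\mathbf{H}_0^1$ (which is immediate since the entries of $J_F(x,U(x))$ are bounded on $\Omega$). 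A standard approximation argument then yields an $n$-dimensional subspace of $C_c^1(\Omega;\mathbb{R}^m)$ on which $Q_U$ remains negative definite, hence $\mu(U)\geq n$.

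Next I would prove $\mu(U)\leq n$ by contradiction using the min-max principle iv) of Proposition \ref{varformautov} applied to $\tilde{L}_U$. Suppose there were an $(n{+}1)$-dimensional subspace $V\subset C_c^1(\Omega;\mathbb{R}^m)\subset \mathbf{H}_0^1(\Omega;\mathbb{R}^m)$ on which $Q_U$ is negative definite. By the min-max characterization
\[
\lambda_{n+1} \,=\, \min_{\dim \mathbf{V}_{n+1}=n+1}\; \max_{\,0\neq \Psi \in \mathbf{V}_{n+1}} \frac{B^{\mathrm{s}}(\Psi,\Psi)}{(\Psi,\Psi)_{\mathbf{L}^2}},
\]
the choice $\mathbf{V}_{n+1} = V$ would force $\lambda_{n+1} < 0$, contradicting the definition of $n$. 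Thus no such subspace exists and $\mu(U)\leq n$, completing the proof.

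The only delicate point is the passage between $\mathbf{H}_0^1$ and $C_c^1$: the spectral apparatus lives in $\mathbf{H}_0^1$, while the Morse index is defined via test vectors in $C_c^1$. I expect this density/approximation step to be the main (though routine) obstacle, and it is handled via the continuity of $Q_U$ together with the elementary fact that negative definiteness is an open condition on finite-dimensional subspaces, so it persists under small $\mathbf{H}_0^1$-perturbations of a basis.
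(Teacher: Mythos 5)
Your proof is correct and follows essentially the same two-step structure as the paper: the inequality $\mu(U)\leq n$ is obtained from the min-max characterization of eigenvalues applied to the symmetric form $B^{\mathrm{s}}=Q_U$ (you cite item iv) of Proposition \ref{varformautov} but actually use the formula from item iii); this is a harmless labeling slip since the paper itself uses iii)), and the reverse inequality $\mu(U)\geq n$ is obtained by turning the $\mathbf{H}_0^1$ eigenfunctions into $C_c^1$ test functions. The only cosmetic difference is in that last step: the paper shrinks to a compactly contained subdomain via continuity of eigenvalues and then regularizes, whereas you invoke density of $C_c^1$ in $\mathbf{H}_0^1$ together with continuity of $Q_U$ and the openness of negative definiteness on a finite-dimensional subspace, which is an equally valid and arguably cleaner way to handle the same technicality.
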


\begin{proof} 
 Let us denote by $\mu (U)$ the Morse index as previously defined, and by $m(U)$ the number of negative symmetric eigenvalues.\\
  If $Q_U$ is negative definite on  a $m$-dimensional subspace of $ C_c^1 (\gO;\Rm)$, by   Proposition \ref{varformautov} iii)  the $m$-th symmetric eigenvalue $\gl _m $ is negative, so that $m (U)  \geq \gm (U)$. \\
  On the other hand if there are $m$ negative eigenvalues of the symmetric operator $\tilde {L}_U$ in $\gO $, by the continuity of the eigenvalues there exists a subdomain $\gO ' \subset \gO $ where there are $m$ negative eigenvalues and  corresponding orthogonal eigenfunctions $W^1, \dots , W^m$ which by trivial extension can be considered as functions  with compact support in $\gO $. Regularizing  these functions  we get that the quadratic form
 $Q_U$ is negative definite on a subspace of $ C_c^1 (\gO;\Rm)$ spanned by $m$ linear independent functions, so that $\mu (U)  \geq m(U)$.
\end{proof}
 
\medskip
\subsection{Preliminary results}
Let $e\in S^{N-1}$ be a direction, i.e. $e \in \RN $, $|e|=1$,  and let us define the hyperplane $T(e)$ and the  ''cap'' $\gO (e) $ as
$$ T(e)  = \{ x \in \RN : x \cdot e=0\}\; , \quad  \gO (e)= \{ x \in \gO : x \cdot e >0\}  $$
 Moreover if $x \in \gO $ let us denote by $\gs _{e}(x)$ the reflection of $x$ through the hyperplane $T(e)$ and by $U^{\gs _{e}}$ the function $U \circ \gs _{e} $ .\\
 
 \begin{lemma}\label{lemma1} Assume that $U$ is any solution of \eqref{modprob} and that 
  the  hypotheses i)--iii)  of Theorem \ref{fconvessa} hold.
  Then, for any direction $e\in S^{N-1}$,  the function   $W^{e}=U-U^{\gs _{e}} = (w_1, \dots , w_m)$  satisfies  in $\gO (e)$ (and in $\gO $)   a linear system 
\be \label{EquazDifferenza}
\begin{cases}  - \gD W + B_e(x) W & =0 \quad \text{ in }  \gO (e) \\
W& =0 \quad \text{ on }  \de \gO (e)
\end{cases}
\ee 
i.e.,  if $B_e =(b_{ij})_{i,j=1}^m$,
$$
\begin{cases}
 - \gD w_1 +  b_{11} w_1+  \dots   +b_{1m} w_m =0  & \text{ in } \gO  (e) \\
  \dots  \dots & \dots \\ 
 - \gD w_m +  b_{m1} w_1+  \dots   +b_{mm} w_m =0  & \text{ in } \gO (e) \\
  w_1=  \dots  = w_m   =0  & \text {  on } \de \gO (e) \, ,
\end{cases}
$$
such that for any  $i,j=1,\dots ,m$  and $ x \in \gO $  the following hold:  
\be \label{monotoniadiagonali} b_{ii} (x) \geq  -  \frac {\de f _i} {\de u_i} (|x|,U(x))\quad \text{if} \quad  u_i (x) \geq u_i^{\gs _e}(x)\; ,  
\ee  
\be  \label{monotonianondiagonali} b_{ij} (x) \geq  -  \frac {\de f _i} {\de u_j} (|x|,U(x))\quad \text{if} \quad  u_i (x) \geq u_i^{\gs _e}(x)\; , \;  u_j (x) \geq u_j^{\gs _e}(x)
\ee  
 while if $u_i (x) = u_i^{\gs _e}(x)$, $u_j (x) = u_j^{\gs _e}(x)$ then $b_{ij} (x) =  -  \frac {\de f _i} {\de u_j} (|x|,U(x))$.\par 
 Moreover the system \eqref{EquazDifferenza} is fully coupled along $W^{e}$ in $\gO (e) $.
\end{lemma}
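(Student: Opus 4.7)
The plan is to derive the linear system for $W^e = U - U^{\gs_e}$ by a coordinate-wise mean-value expansion of the nonlinear differences, read off the coefficients $b_{ij}$, and verify the stated inequalities together with full coupling.

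First, since $\gs_e$ is a Euclidean isometry fixing the origin, $|\gs_e(x)| = |x|$ and $\gD(U \circ \gs_e) = (\gD U) \circ \gs_e$; componentwise this gives
\begin{equation*}
-\gD w_i(x) = f_i(|x|, U(x)) - f_i(|x|, U^{\gs_e}(x)) \quad \text{in } \gO,
\end{equation*}
while $W^e = 0$ on $\de \gO(e)$ because $W^e$ vanishes on $T(e) \cap \ov{\gO}$ by construction and on $\de \gO \cap \ov{\gO(e)}$ since $U = 0$ there.

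Second, I invoke hypothesis iii) to write, for $m \geq 3$, $f_i = \sum_{k \neq i} g_{ik}(|x|, u_i, u_k)$ (the case $m = 2$ is handled identically with $f_i$ itself in the role of the $g_{ik}$'s). For each $g_{ik}$ I use the coordinate-wise path that varies $u_i$ first and then $u_k$, obtaining via the fundamental theorem of calculus
\begin{equation*}
g_{ik}(|x|, u_i, u_k) - g_{ik}(|x|, u_i^{\gs_e}, u_k^{\gs_e}) = \alpha_{ik}^{(i)}(x)\, w_i(x) + \alpha_{ik}^{(k)}(x)\, w_k(x),
\end{equation*}
with
\begin{equation*}
\alpha_{ik}^{(i)}(x) = \int_0^1 \tfrac{\de g_{ik}}{\de u_i}(|x|, u_i^{\gs_e} + t w_i, u_k)\, dt, \quad \alpha_{ik}^{(k)}(x) = \int_0^1 \tfrac{\de g_{ik}}{\de u_k}(|x|, u_i^{\gs_e}, u_k^{\gs_e} + t w_k)\, dt.
\end{equation*}
Setting $b_{ii}(x) = -\sum_{k \neq i} \alpha_{ik}^{(i)}(x)$ and $b_{ij}(x) = -\alpha_{ij}^{(j)}(x)$ for $i \neq j$ produces the claimed linear system.

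Third, the three pointwise inequalities are direct consequences of monotonicity. Cooperativity (hypothesis i)) makes each $\alpha_{ij}^{(j)} \geq 0$, so $b_{ij} \leq 0$ for $i \neq j$. When $w_i(x) \geq 0$, the argument $u_i^{\gs_e} + t w_i$ lies below $u_i$ for all $t \in [0,1]$, and monotonicity of $\tfrac{\de g_{ik}}{\de u_i}$ in $u_i$ (Remark a)) bounds each integrand by $\tfrac{\de g_{ik}}{\de u_i}(|x|, u_i, u_k)$; summing in $k$ yields $b_{ii}(x) \geq -\tfrac{\de f_i}{\de u_i}(|x|, U(x))$. When both $w_i, w_j \geq 0$, monotonicity of $\tfrac{\de g_{ij}}{\de u_j}$ in its two slots (Remark b)) bounds the integrand defining $\alpha_{ij}^{(j)}$ by $\tfrac{\de f_i}{\de u_j}(|x|, U)$, giving $b_{ij}(x) \geq -\tfrac{\de f_i}{\de u_j}(|x|, U(x))$. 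The equality case $u_i = u_i^{\gs_e}$, $u_j = u_j^{\gs_e}$ is immediate since the integrand becomes constant in $t$.

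Fourth, for full coupling of the linear system along $W^e$ in $\gO(e)$, cooperativity is already established and I expect the coupling condition to be the subtle point. The trick is that hypothesis i) of Theorem \ref{fconvessa} is assumed for \emph{every} direction, in particular for $-e$. Thus for any partition $I, J$ of $\{1,\dots,m\}$ there exist $i_0 \in I$, $j_0 \in J$ such that $\{y \in \gO(-e) : \tfrac{\de f_{i_0}}{\de u_{j_0}}(|y|, U(y)) > 0\}$ has positive measure, and pulling back by $y = \gs_e(x)$ shows that $E' := \{x \in \gO(e) : \tfrac{\de f_{i_0}}{\de u_{j_0}}(|x|, U^{\gs_e}(x)) > 0\}$ has positive measure as well. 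On $E'$ the integrand of $\alpha_{i_0 j_0}^{(j_0)}(x)$ at $t=0$ equals $\tfrac{\de f_{i_0}}{\de u_{j_0}}(|x|, U^{\gs_e}(x)) > 0$, so by continuity in $t$ it remains positive on a nontrivial subinterval near $0$, forcing $\alpha_{i_0 j_0}^{(j_0)}(x) > 0$ and hence $b_{i_0 j_0}(x) < 0$ on $E'$, as required.
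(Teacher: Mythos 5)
Your proof is correct and follows essentially the same route as the paper: a coordinate-wise telescoping mean-value expansion of $F(|x|,U)-F(|x|,U^{\gs_e})$ (packaged pairwise via the $g_{ik}$, which is equivalent to the paper's sequential variable-by-variable telescoping since each $g_{ik}$ only sees the legs that vary $u_i$ and $u_k$), the same integral formulas for $b_{ii}$ and $b_{ij}$, the same use of Remark \ref{significatocondizioni} a), b) for the inequalities, and the same pull-back by $\gs_e$ from $\gO(-e)$ for full coupling; your positive-measure-set phrasing of the last step is marginally tidier than the paper's single-point-plus-continuity argument, but the idea is identical.
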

\begin{proof} From the equation $- \gD U = F (|x|,U(x)) $ we deduce that the reflected function $U_{\gs _{e}} $ satisfies the equation
$- \gD U^{\gs _{e}}  = F (|x|,U^{\gs _{e}}(x) ) $ and hence  the difference $W^{e}=U-U^{\gs _{e}} $  satisfies  \\
$- \gD W^{e}  = F(|x|, U)-F(|x|, U^{\gs _{e}})$.\par
We can write the first equation of this system as
\begin{multline} \notag  - \Delta w_1=\left [ f_1(|x|, u_1,u_2, \dots , u_m)- f_1(|x|, u_1^{\gs _{e}}, u_2 ,\dots , u_m) \right ]+ \\
\left [ f_1(|x|, u_1^{\gs _{e}}, u_2 ,\dots , u_m) - f_1(|x|, u_1^{\gs _{e}},  u_2 ^{\gs _{e}} ,\dots , u_m) \right ]+   \\
  \dots \left [f_1(|x|, u_1 ^{\gs _{e}},u_2 ^{\gs _{e}}, \dots , u_{m-1} ^{\gs _{e}}, u_m) - f_1(|x|, u_1 ^{\gs _{e}},u_2 ^{\gs _{e}}, \dots , u_m ^{\gs _{e}}) \right ]
\end{multline}
Then we define, using iii): 
$$ b_{11}(x)= -\int _0^1 \sum _{k\neq 1} \frac {\de g _{1k}} {\de u_1} \left [|x|, tu_1(x)+ (1-t)u_1(x^{\gs _e}),u_k(x)  \right ]dt 
 \;  ,
$$ 
$$
b_{12}(x)
= -  \int _0^1  \frac {\de g _{12}} {\de u_2} \left [|x|, u_1(x^{\gs _e}) ,t u_2(x) + (1-t) u_2 (x^{\gs _e})  \right ] dt \; ,  
$$
\dots 
$$   b_{1m}(x)
= -  \int _0^1  \frac {\de g _{1m}} {\de u_m} \left [|x|, u_1(x^{\gs _e}), t u_m(x) + (1-t) u_m (x^{\gs _e}) \right ]dt  
$$
Hence we can write the equation as
$$ - \Delta w_1 + b_{11}(x)(u_1-u_1 ^{\gs _{e}})+ \dots + b_{1m}(x)(u_m-u_m^{\gs _{e}}) =0
$$
Proceeding analogously for the other equations and letting $i$ playing the same role of $1$ in the first equation, we get that  $W^{e}$ satisfies the linear system \eqref{EquazDifferenza}
 whith $B_e= (b_{ij})_{i,j=1}^m $ defined by  
 \be  \label{coefficientidiagonali} b_{ii}(x)=  -\int _0^1 \sum _{k\neq i} \frac {\de g _{ik}} {\de u_i} \left [|x|, tu_i(x)+ (1-t)u_i(x^{\gs _e}),u_k(x)  \right ]dt 
 \ee 
   while if $j \neq i$  
 \be  \label{coefficientinondiagonali} b_{ij}(x)= -  \int _0^1  \frac {\de g _{ij}} {\de u_j} \left [|x|,u_i^{\gs _e}  ,t u_j(x) + (1-t) u_j ^{\gs _e} (x)  \right ] dt
 \ee 
 By ii) and  iii), using Remark \ref{significatocondizioni},  we get  \eqref{monotoniadiagonali}, \eqref{monotonianondiagonali}.\par
  Obviously  if $u_i (x) = u_i(x^{\gs _e})$, $u_j (x) = u_j(x^{\gs _e})$ then $b_{ij} (x) =  -  \frac {\de f _i} {\de u_j} (|x|,U(x))$.\par   
Since the system \eqref{modprob} is weakly coupled in $\gO (e) $,  $ \frac {\de f _i} {\de u_j} = \frac {\de g _{ij}} {\de u_{j}} \geq 0  $ for any $i \neq j$, so that $b_{ij} \leq 0 $ and the system \eqref{EquazDifferenza} is weakly coupled in $\gO (e) $ as well.\\
The system  \eqref{EquazDifferenza} is fully coupled in $\gO (e) $ because if $I,J$ are as in Definition \ref{semilinearicoupled}, 
there exist   $i_0 \in I$, $ j_0 \in J$, $\overline{x} \in \gO (-e) $,  such that   
$ \frac {\de f _{i_0}} {\de u_{j_0}}(|\overline{x}|,U(\overline{x}))= \frac {\de g _{i_0j_0}} {\de u_{j_0}}(|\overline{x}|,u_{i_0}(\overline {x}), u_{j_0}(\overline {x})) >0$, since by hypothesis i) the system \eqref{modprob}  is fully coupled along $U$ in $\gO (-e)$.\\
Then by the nonnegativity and continuity of $ \frac {\de g _{i_0j_0}} {\de u_{j_0}} $ 
also 
$$ b_{i_0 j_0}(\overline{x} ^{\gs _e}) = -  \int _0^1  \frac {\de g _{i_0j_0}} {\de u_{j_0}} \left [|\overline {x}|,u_{i_0}(\overline {x})  ,t u_{j_0}  (\overline {x}^{\gs _e})   + (1-t) u_{j_0}(\overline {x})  \right ] dt $$ 
is negative, with $\overline{x} ^{\gs _e} \in \gO (e)$ .
 \end{proof}

\begin{lemma}\label{lemma2} Let $U=(u_1, \dots , u_m)$ be a solution of \eqref{modprob} and assume that the hypothesis i) of Theorem \ref{fconvessa} holds.
 If for every $e\in S^{N-1}$ we have either $U \geq U^{\gs _{e}}$ in $\gO (e)$ or $U \leq U^{\gs _{e}}$ in $\gO (e)$, then $U$ is foliated Schwarz symmetric.
\end{lemma}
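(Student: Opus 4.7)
The plan is to pick a preferred direction $p\in S^{N-1}$ by maximizing the first component of $U$ on a sphere inside $\Omega$, then to use the dichotomy hypothesis together with the strong maximum principle (Theorem~\ref{SMP}) to force the one-sided inequality $U\ge U^{\gs_e}$ in $\Omega(e)$ whenever $e\cdot p>0$. Axial symmetry about $\R p$ and angular monotonicity will then be recovered from this one-sided inequality by a limiting/reflection argument. Concretely, since $\Omega$ is a ball or an annulus I would fix $r>0$ with $S_r:=\{|x|=r\}\subset\Omega$, choose $q\in S_r$ where $u_1\big|_{S_r}$ attains its maximum, and set $p:=q/r$.

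The central claim is that $U\ge U^{\gs_e}$ in $\Omega(e)$ whenever $e\cdot p>0$; call it $(\ast)$. The dichotomy leaves two cases, so I only have to handle the ``wrong'' one: $V:=U^{\gs_e}-U\ge 0$ in $\Omega(e)$. By the mean value theorem $V$ weakly satisfies $-\gD V=A(x)V$ in $\Omega(e)$ with
\[
a_{ij}(x)=\int_0^1 \frac{\de f_i}{\de u_j}\bigl(|x|,\,tU(x)+(1-t)U^{\gs_e}(x)\bigr)\,dt,
\]
and hypothesis i) makes $a_{ij}\ge 0$ for $i\ne j$. At a point $x_0\in\Omega(e)$ where $\frac{\de f_{i_0}}{\de u_{j_0}}(|x_0|,U(x_0))>0$ (available by the full coupling of the nonlinear system along $U$ in $\Omega(e)$), continuity of $J_F$ and of $U,U^{\gs_e}$ forces the integrand to be positive on an $(x,t)$-neighborhood of $(x_0,1)$, hence $a_{i_0 j_0}>0$ on an open set of positive measure, so the cooperative linear system for $V$ is fully coupled in $\Omega(e)$. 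On $\de\Omega(e)$ we have $V\equiv 0$ (central symmetry of $\Omega$ handles the curved piece; $\gs_e=\mathrm{id}$ on $T(e)$ handles the flat piece), so Theorem~\ref{SMP} applied to $V$ yields the dichotomy $V\equiv 0$ or $V>0$ in $\Omega(e)$. But $q\in\Omega(e)$ and $v_1(q)=u_1(\gs_e(q))-u_1(q)\le 0$ by maximality, forcing $v_1(q)=0$ and hence $V\equiv 0$; so $(\ast)$ holds in this branch as well.

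With $(\ast)$ in hand, for any $e\perp p$ I would approximate $e$ by sequences $e_n^\pm\to e$ with $e_n^+\cdot p>0$ and $e_n^-\cdot p<0$; applying $(\ast)$ to $e_n^+$ and to $-e_n^-$ and passing to the limit by continuity gives both $U\ge U^{\gs_e}$ in $\Omega(e)$ and $U\ge U^{\gs_e}$ in $\Omega(-e)$, so $U=U^{\gs_e}$ on $\Omega$. Thus $U$ is invariant under every reflection in a hyperplane containing $\R p$, i.e.\ $U$ depends only on $|x|$ and $\theta=\arccos(x\cdot p/|x|)$. For monotonicity in $\theta$, given $|x_1|=|x_2|$ with $\theta_1<\theta_2$, the unit vector $e:=(x_1-x_2)/|x_1-x_2|$ satisfies $e\cdot p>0$, $0\in T(e)$, $\gs_e(x_1)=x_2$ and $x_1\in\Omega(e)$, so $(\ast)$ gives $U(x_1)\ge U(x_2)$. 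The key obstacle is exactly the step where full coupling is transferred from the nonlinear system along $U$ to the linearized cooperative system for $V$: the latter's coefficients are averages of $J_F$ along the segment from $U$ to $U^{\gs_e}$, and it is continuity of $J_F$ together with openness of the positivity set of $\frac{\de f_{i_0}}{\de u_{j_0}}(|\cdot|,U(\cdot))$ that carries the positivity over and permits the invocation of Theorem~\ref{SMP}.
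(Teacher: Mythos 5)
Your proof is correct, and it takes a cleaner, more vectorial route than the paper's. The paper argues component-by-component: for each $i$ it maximizes $u_i$ on a sphere to obtain its own direction $p_i$, and uses only the \emph{scalar} strong maximum principle on the single inequality $-\gD w_i + b_{ii}w_i\le 0$, $w_i\le 0$ in $\gO(e)$ (cooperativeness alone suffices) to get $u_i\ge u_i^{\gs_e}$ in $\gO(e)$ for $e\cdot p_i>0$, concluding each $u_i$ is foliated Schwarz symmetric about its own $p_i$. Only afterwards is the \emph{system} strong maximum principle (Theorem~\ref{SMP}) brought in, applied to $W^e$ with $w_1\equiv 0$ for $e\perp p_1$ and full coupling, to force all the $p_i$ to coincide. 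You instead fix a single $p$ from $u_1$ alone and bring full coupling to bear immediately, applying Theorem~\ref{SMP} to the whole vector $V=U^{\gs_e}-U$ and using $v_1(q)=0$ to exclude $V>0$; this folds the unification step into the one-sided inequality. Your coefficients $a_{ij}=\int_0^1 \frac{\de f_i}{\de u_j}(|x|,tU+(1-t)U^{\gs_e})\,dt$ are also more natural here than the telescoping construction of Lemma~\ref{lemma1}, which the paper quietly reuses even though Lemma~\ref{lemma2} only posits hypothesis i) while Lemma~\ref{lemma1} requires ii)--iii) for its monotonicity bounds; only the cooperativeness and full coupling of the linear system are actually needed for Lemma~\ref{lemma2}, and your decomposition delivers exactly those with no extra structural assumptions. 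The one piece of extra work your route demands --- transferring $\frac{\de f_{i_0}}{\de u_{j_0}}(|\cdot|,U(\cdot))>0$ on a positive-measure set to $a_{i_0j_0}>0$ on a positive-measure set, via nonnegativity of the integrand and continuity near $t=1$ --- you carry out correctly, and the closing limiting and reflection arguments for $e\perp p$ and for the angular monotonicity are sound.
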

\begin{proof} Let us start by proving that each component $u_i$ of the solution is foliated Schwarz symmetric with respect to a vector $p_i \in \RN$, $i=1, \dots , m$. To do this we argue as in Lemma 2.4 in \cite{GPW} .\\
Let $p_i \in S^{N-1}$ be such that for some $r>0$   $S_r=\{  x \in \RN : |x|=r\} \subset \gO $ and  $u_i(r p_i)= \max _{S_r} u_i $.  \\
We define $T_{p_i}^{+}= \{ e \in S^{N-1}: e \cdot p_i >0 \}$ and $T_{p_i}^{-}= \{ e \in S^{N-1}: e \cdot p_i <0 \}$. To prove that $u_i$ is foliated Schwarz symmetric with respect to $p_i$ it suffices to show that 
\be \label{T+} u_i (x) \geq u_i (\gs _{e}(x)) \quad \text{ for all } x \in \gO (e)
\ee
whenever $e \in T_{p_i}^{+}$. \\
Indeed this would imply the reverse inequality for $e \in T_{p_i}^{-}$, so that $u_i$ would be axially symmetric about the axis with direction $p_i$, and the angular monotonicity would follow also by the same inequalities.\\
So for fixed $e \in T_{p_i}^{+}$ we consider the difference $ w_i = u_i - u_i^{\gs _e}$,  and  
by assumption either $w_i \geq 0 $, or   $w_i  \leq  0 $. 
In the first case  \eqref{T+}, is satisfied.\\
 In the second case, since the system \eqref{EquazDifferenza}   is cooperative, we have that $b_{ij} \leq 0 $ if $i \neq j$  so that, since $w_j \leq 0 $, 
$$ - \gD w_i + b_{ii} w_i = -\sum _{i \neq j} b_{ij}w_j
 \leq 0 \; , \quad w_i \leq 0 \quad \text{ in } \gO (e)
 $$
 Then, by the (scalar)  Strong Maximum Principle, we get that either $w_i <0$ or $w_i \equiv 0$ in $\gO (e)$, and the first case is not possible, since $rp_i \in \gO (e) $
 and $w_i (rp_i) \geq 0 $ because $u_i (rp_i)$ is the maximum of $u_i$ on $S_r$.  
 Hence $w_i \equiv 0 $ in $\gO (e)$ and \eqref{T+} is satisfied.\\
 At this point, to prove that the solution $U$ is foliated Schwarz symmetric we only need to prove that the vectors $p_i$ are the same vector for all $i=1, \dots , m$.
 To this aim consider the vector $p_1$ and take any hyperplane $T(e)= \{ x \in \RN : x \cdot e =0 \}$ passing through the axis with direction $p_1$.
 The difference $W^{e}=U-U^{\gs _{e}} = (w_1, \dots , w_m)$  satisfies  in $\gO (e)$ the linear system 
\eqref{EquazDifferenza}, which is fully coupled along $W^{e}$ in $\gO (e)$.  Moreover either $U \geq U^{\gs _{e}}$ in $\gO (e)$ or $U \leq U^{\gs _{e}}$ in $\gO (e)$ by hypothesis, and $w_1 \equiv 0 $ in $\gO (e)$. 
So  by   Theorem \ref{SMP} necessarily we have that $w_j  \equiv 0 $ in $\gO (e)$ for all $j=1,\dots ,m $, i.e. all vectors $p_j $ coincide with $p_1$. 
 \end{proof}
 
\begin{lemma}\label{lemma3} Let $U= (u_1, \dots , u_m)$  be a solution of \eqref{modprob} and assume that the hypothesis i) of Theorem \ref{fconvessa} holds.\\
Suppose that there exists a direction $e$ such that $U$ is symmetric with respect to $T(e)$ and 
the principal eigenvalue $\tilde {\gl } _1 (\gO (e)) $ of the linearized operator $L_U (V) = - \gD V - J_F (x, U) V $ in $\gO (e)$ is nonnegative.
Then $U$ is foliated Schwarz symmetric. 
\end{lemma}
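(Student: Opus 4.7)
My strategy is to build test functions for the linearised operator $L_U$ on $\gO(e)$ from angular derivatives of $U$. For each unit vector $e'\in T(e)$, let $V^{e'}:=\de_\gth U$ where $\gth$ is the angle of rotation in the plane spanned by $e$ and $e'$; explicitly, $V^{e'}(x)=(x\cdot e')\de_e U(x)-(x\cdot e)\de_{e'}U(x)$. Since $|x|$ is invariant under this rotation, differentiating $-\gD U=F(|x|,U)$ in $\gth$ yields $L_U V^{e'}=0$ in $\gO$. Moreover $V^{e'}=0$ on $\de\gO(e)$: on $\de\gO\cap\overline{\gO(e)}$ because $U=0$ there and the rotation is tangential to the rotationally symmetric boundary, and on $T(e)\cap\gO$ because at a point with $x\cdot e=0$ the formula reduces to $(x\cdot e')\de_e U(x)$, while the symmetry $U\circ\gs_e=U$ forces $\de_e U=0$ on $T(e)$. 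Hence $V^{e'}\in\textbf{H}_0^1(\gO(e);\Rm)$ is a weak solution of $L_U V=0$ in $\gO(e)$.

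I then feed $V^{e'}$ into the hypothesis $\tilde{\gl}_1(\gO(e))\ge 0$. If $\tilde{\gl}_1(\gO(e))>0$, Proposition \ref{principaleigenvalue}(ii) gives the weak maximum principle for $L_U$ on $\gO(e)$ and, applied to $\pm V^{e'}$, forces $V^{e'}\equiv 0$. In the borderline case $\tilde{\gl}_1(\gO(e))=0$, a sliding argument with the positive principal eigenfunction $\gPs_1$ shows that $V^{e'}$ must be a scalar multiple of $\gPs_1$: for $t$ sufficiently large $t\gPs_1+V^{e'}$ is strictly positive in $\gO(e)$ (using Hopf on $\gPs_1$ near $\de\gO(e)$), and decreasing $t$ to the smallest value $t_0$ at which strict positivity just fails yields a nonnegative element of the kernel of $L_U$ on $\gO(e)$; Theorem \ref{SMP} (strong maximum principle for cooperative fully-coupled systems) forces it to be either identically zero or strictly positive, and the latter is ruled out by continuity in $t$, so $V^{e'}=-t_0\gPs_1$. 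In either case $V^{e'}=c(e')\gPs_1$ has constant sign in $\gO(e)$, and since $e'\mapsto V^{e'}$ is linear in $e'\in T(e)$ and $\gPs_1$ is fixed, $c$ is a linear functional on $T(e)$, i.e.\ $c(e')=c_0\cdot e'$ for some $c_0\in T(e)$.

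To reconstruct the symmetry I split into two cases. If $c_0=0$, every $V^{e'}$ vanishes, so $U$ is invariant under the one-parameter rotation group of every $(e,e')$-plane with $e'\in T(e)$; the Lie-bracket identity $[X_{e,e'_1},X_{e,e'_2}]=\pm X_{e'_1,e'_2}$ (with $X_{a,b}$ the generator of rotation in the $(a,b)$-plane) then generates the full Lie algebra $\mathfrak{so}(N)$, so $U$ is radial and trivially foliated Schwarz symmetric. If $c_0\ne 0$, put $p:=c_0/|c_0|$: the vanishing of $V^{e'}$ for every $e'\perp c_0$ in $T(e)$, combined with the same Lie-bracket calculation, gives invariance of $U$ under the $\mathrm{SO}(N-1)$-stabiliser of $p$, hence $U(x)=U(|x|,\gth_p)$ with $\gth_p=\arccos(x\cdot p/|x|)$. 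A short computation shows that at a generic point of $\gO(e)$ the rotation derivative $V^{c_0}$ equals a strictly positive factor times $\de_{\gth_p}U$, so the definite sign of $V^{c_0}=|c_0|^2\gPs_1$ translates into strict monotonicity of $U$ in $\gth_p$, i.e.\ foliated Schwarz symmetry with respect to $\pm p$.

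The main obstacle is the sliding step in the borderline case: Proposition \ref{principaleigenvalue}(i) gives simplicity of $\tilde{\gl}_1$ only among positive eigenfunctions, while $V^{e'}$ may change sign a priori, so one has to translate by a large multiple of $\gPs_1$ to reduce to the nonnegative setting before invoking the strong maximum principle. The full coupling of the linearised system on $\gO(e)$, required both for Theorem \ref{SMP} and for the existence of the positive eigenfunction $\gPs_1$, is supplied by hypothesis i) of Theorem \ref{fconvessa}.
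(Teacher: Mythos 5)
The core of your proof coincides with the paper's: in both, the angular derivative of $U$ in a plane through $e$ solves the linearized system, vanishes on $\de \gO (e)$, and --- because $\tilde{\gl}_1(\gO(e))\ge 0$ --- is either zero or a multiple of the positive principal eigenfunction $\Psi_1$, hence of one sign in $\gO(e)$. The endgames diverge, though. The paper, following Pacella--Weth, converts the sign information on the single angular derivative $U_\theta$ into the ordering $U\ge U^{\gs_{e'}}$ or $U\le U^{\gs_{e'}}$ for every $e'$ and then hands off to Lemma~\ref{lemma2}. You instead exploit that $e'\mapsto V^{e'}$ is linear on $T(e)$, write $V^{e'}=(c_0\cdot e')\Psi_1$, and reconstruct the symmetry directly: when $c_0=0$ the vanishing of all $V^{e'}$ together with $[J_{e,e'_1},J_{e,e'_2}]=\pm J_{e'_1,e'_2}$ yields invariance under all of $\mathfrak{so}(N)$, hence radiality; when $c_0\ne 0$ the same bracket argument gives invariance under the stabiliser of $p=c_0/|c_0|$, and the sign of $V^{c_0}$ yields the angular monotonicity. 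This is a self-contained reconstruction that avoids Lemma~\ref{lemma2} and the reflection comparison entirely. One further difference in your favour: both arguments need to know that, in the borderline case $\tilde{\gl}_1(\gO(e))=0$, any (not necessarily positive) Dirichlet kernel element of $L_U$ on $\gO(e)$ is a multiple of $\Psi_1$; the paper simply appeals to ``simplicity of the principal eigenvalue'', even though Proposition~\ref{principaleigenvalue} as stated covers only positive kernel elements, whereas you supply a sliding argument to justify it. That sliding argument is right in spirit but glosses over one point: $\de\gO(e)$ has corners where $T(e)$ meets $\de\gO$, Hopf's lemma is not directly available there, and so the claim that $t\Psi_1+V^{e'}>0$ for $t$ large (and the descent to $t_0$) should be run through the small-measure maximum principle near the corners, or a Serrin-type corner lemma, rather than through a pointwise Hopf comparison with $\Psi_1$.
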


\begin{proof}  We adapt to the case of systems the proof of Proposition 2.3 in \cite{PW}.\\
After a rotation,  we may assume that $e = e_2= (0,1,\dots 0)$ so that $T(e)=\{ x  \in \RN : x_2 =0 \}$. To prove the assertion we will use  Lemma \ref{lemma2}, so we consider a direction $e' \in S^{N-1}$, $e' \neq e$. After another orthogonal transformation which leaves $e_2$ and $T(e_2) $ invariant, we may assume that 
$e' = (\cos \theta _0 , \sin \theta _0,0, \dots ,0)$, for some $\theta _0 \in (- \frac {\pi}2, \frac {\pi}2)$. Now we choose new coordinates, replacing $(x_1, x_2)$ by polar coordinates $(r, \theta ) $, $x_1= r \cos \theta $,  $x_2= r  \sin \theta $,  and leaving $\tilde {x}= (x_3, \dots x_N) $ unchanged.\\
The derivative $U_{\theta} $ of $U$ with respect to $\theta $, extended in the origin with $U_{\theta}(0)=0$ if $\gO $ is a ball, satisfies the linearized system, i.e.
\be \label{sistema  linearizzato} - \gD U _{\theta} - J_F (|x|, U) U_{\theta} =0 \quad \text{ in } \gO (e_2)
\ee 
Moreover, by the symmetry of $U$ with respect to the hyperplane $T(e_2) $ we have that $U_{\theta}$ is antisymmetric with respect to $T(e_2) $ and therefore vanishes on $T(e_2) $ and since it vanishes on $\de \gO $, it vanishes on $\de \gO(e_2)$ as well.\\
Thus if $\tilde {\gl _1} (\gO (e)) >0$, since the maximum principle holds, we get that $U_{\theta } \equiv 0$ in $\gO (e)$.\\
 If instead $\tilde {\gl _1} (\gO (e)) =0$ and $U_{\theta} \not \equiv 0 $, by the simplicity of the principal eigenvalue we get that $U_{\theta}$ is a principal eigenfunction and hence  it is positive (or negative) in $\gO (e)$.\\
   In any case $U_{\theta}$ does not change sign in $\gO (e)$, and arguing exactly as in Proposition 2.3 of \cite{PW} we get that for any direction $e$ either 
$U \geq U^{\gs _e}$ or  $U \leq U^{\gs _e}$. Thus, by Lemma \ref{lemma2}, $U$ is foliated Schwarz symmetric.\\
 \end{proof}
 
 \begin{lemma}\label{lemma4}  Assume that $U$ is a solution of \eqref{modprob}  and that the hypotheses i)--iii) of Theorem \ref{fconvessa} hold. 
Then   
$$Q_U\left ( (W^{e})^{+}; \gO (e)  \right )=Q_U\left ( (U-U^{\gs _{e}})^{+} ; \gO (e) \right ) \leq 0 \quad \quad  \forall \; e \in S^{N-1}
$$
 where $Q_U$ is the quadratic form defined in \eqref{formaquadraticalinearizzato}.   

 \end{lemma}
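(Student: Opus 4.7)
The plan is to test the linear system satisfied by $W^e = U - U^{\sigma_e}$ established in Lemma~\ref{lemma1} against its positive part, and then use the sign/monotonicity information on the coefficients of $B_e$ together with the cooperativeness of the linearized system to convert the resulting identity into the desired inequality on $Q_U$.

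First, I would record the basic setup. By Lemma~\ref{lemma1}, $W^e$ solves $-\Delta W^e + B_e(x) W^e = 0$ in $\Omega(e)$. Because $U$ is $C^{3,\alpha}(\bar\Omega;\mathbb{R}^m)$ and $W^e$ vanishes both on $\partial\Omega \cap \overline{\Omega(e)}$ (since $U=0$ on $\partial\Omega$) and on $T(e) \cap \bar\Omega$ (by antisymmetry of $W^e$ through $T(e)$), we have $W^e \in H^1_0(\Omega(e);\mathbb{R}^m)$, and therefore $(W^e)^+ = ((w_1^e)^+,\dots,(w_m^e)^+) \in H^1_0(\Omega(e);\mathbb{R}^m)$ is an admissible test function. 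Taking the weak form of the $i$-th equation with test function $(w_i^e)^+$ and summing over $i=1,\dots,m$ yields
\begin{equation*}
\int_{\Omega(e)} |\nabla (W^e)^+|^2\,dx + \int_{\Omega(e)} \sum_{i,j=1}^m b_{ij}(x)\, w_j^e\, (w_i^e)^+\,dx = 0,
\end{equation*}
where I used the standard identity $\nabla w_i^e \cdot \nabla (w_i^e)^+ = |\nabla (w_i^e)^+|^2$ a.e.

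Next, I would decompose $w_j^e = (w_j^e)^+ - (w_j^e)^-$ in the coupling term. For $i\neq j$, cooperativeness gives $b_{ij}\leq 0$, and since $(w_j^e)^-(w_i^e)^+ \geq 0$, we have $-b_{ij}(w_j^e)^-(w_i^e)^+ \geq 0$. Hence
\begin{equation*}
\sum_{i,j} b_{ij}\, w_j^e (w_i^e)^+ \;\geq\; \sum_{i,j} b_{ij}\, (w_j^e)^+ (w_i^e)^+.
\end{equation*}
Now on the support of $(w_i^e)^+ (w_j^e)^+$ we have simultaneously $u_i \geq u_i^{\sigma_e}$ and $u_j \geq u_j^{\sigma_e}$, so the pointwise monotonicity bounds \eqref{monotoniadiagonali} and \eqref{monotonianondiagonali} from Lemma~\ref{lemma1} give, for every $i,j$,
\begin{equation*}
b_{ij}(x)\,(w_i^e)^+(x)\,(w_j^e)^+(x) \;\geq\; -\frac{\partial f_i}{\partial u_j}(|x|,U(x))\,(w_i^e)^+(x)\,(w_j^e)^+(x).
\end{equation*}
Combining the last two inequalities and inserting into the integral identity yields
\begin{equation*}
\int_{\Omega(e)}|\nabla(W^e)^+|^2\,dx \;\leq\; \int_{\Omega(e)} \sum_{i,j}\frac{\partial f_i}{\partial u_j}(|x|,U)\,(w_i^e)^+(w_j^e)^+\,dx,
\end{equation*}
which is exactly $Q_U((W^e)^+;\Omega(e))\leq 0$.

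The only subtle point, and the step I would write most carefully, is the pointwise application of the monotonicity bounds: one has to verify that on the set $\{(w_i^e)^+ > 0,\,(w_j^e)^+>0\}$ the hypothesis of \eqref{monotonianondiagonali} (both $u_i\geq u_i^{\sigma_e}$ and $u_j\geq u_j^{\sigma_e}$) is satisfied, and to handle the diagonal $i=j$ via \eqref{monotoniadiagonali}; outside that support both sides of the pointwise inequality vanish, so the integrated inequality is unaffected. Everything else is routine algebra on sign decompositions, the key structural input being the cooperativeness of $B_e$ together with the convexity-derived monotonicity of $\partial f_i/\partial u_j$ encoded in Remark~\ref{significatocondizioni}.
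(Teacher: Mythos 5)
Your proof is correct and follows essentially the same route as the paper: multiply the $i$-th equation from Lemma~\ref{lemma1} by $(w_i^e)^+$, split $w_j^e=(w_j^e)^+-(w_j^e)^-$, discard the favourable $(w_j^e)^-$ term using cooperativeness ($b_{ij}\le 0$ for $i\ne j$), and then invoke the pointwise lower bounds \eqref{monotoniadiagonali}--\eqref{monotonianondiagonali} on the support of $(w_i^e)^+(w_j^e)^+$ to pass from $B_e$ to $-J_F(|x|,U)$. The only cosmetic difference is that you sum over $i$ before estimating rather than estimating each equation and then summing, which does not change the argument.
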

\begin{proof}
 As we saw in Lemma \ref{lemma1} the difference $W^{e}=U-U^{\gs _{e}} $ satisfies the linear system \eqref{EquazDifferenza}. 
Let us multiply the $i$-th equation of this system by $(u_i- u_i^{\gs _{e}} )^{+}$ and integrate. By Lemma \ref{lemma1} we get, since   $b_{ik} \leq 0$ if $i \neq k $ and  in the domain of integration where $b_{ik}$ appears
$u_i(x) \geq u_i^{\gs _e}(x)$, $u_k(x) \geq u_k^{\gs _e}(x)$ (so that   $b_{ik} (x) \geq  -  \frac {\de f _i} {\de u_k} (|x|,U(x))$) :
 \begin{multline} 0= 
 \int _{\gO}  \left ( | \nabla (u_i- u_i^{\gs _{e}} )^{+}|^2 
+ b_{ii}(x) [(u_i- u_i^{\gs _{e}} )^{+}]^2  \right )+  \\
  \sum _{k \neq i} \int _{\gO} \left (  b_{ik}(x) [(u_i- u_i^{\gs _{e}} )^{+}]\left [ (u_k - u_k^{\gs _{e}})^{+}  -(u_k - u_k^{\gs _{e}})^{-}   \right ]  \right )\geq \\
 \int _{\gO} \left (  |\nabla (u_i- u_i^{\gs _{e}} )^{+}|^2 
+ b_{ii}(x) [(u_i- u_i^{\gs _{e}} )^{+}]^2\right )  +\\
  \sum _{k \neq i}  \int _{\gO} \left (  b_{ik}(x) [(u_i- u_i^{\gs _{e}} )^{+}]\left [ (u_k - u_k^{\gs _{e}})^{+}     \right ] \right ) \geq \\
  \int _{\gO} \left (  |\nabla (u_i- u_i^{\gs _{e}} )^{+}|^2 
-     \frac {\de  f_i}{\de u_i}(|x|, u_1, u_2, \dots , u_m) [(u_i- u_i^{\gs _{e}} )^{+}]^2 \right )  \\
-    \sum _{k \neq i}  \int _{\gO} \left ( 
 \frac {\de  f_i}{\de u_k} (|x|, u_1, u_2, \dots , u_m) (u_i- u_i^{\gs _{e}} )^{+}(u_k - u_k^{\gs _{e}})^{+}  \right ) 
\end{multline}
 for any $i=1,\dots ,m$. Hence  we obtain that 
\be 0 \geq 
Q_U \left ((u_1-u_1^{\gs _{e}})^{+}, \dots ,  (u_m-u_m^{\gs _{e}})^{+} ; \gO (e) \right )
\ee
 \end{proof}

\begin{lemma}\label{lemma5}  Suppose that $U$ is a solution of \eqref{modprob} with Morse index $\mu (U) \leq N$ and assume that the hypothesis i) of Theorem \ref{fconvessa} holds.
Then there exists a direction $e \in S^{N-1}$ such that 
$$
 Q_U (\Psi ; \gO (e) )  \geq 0
  \notag
$$
for any $\Psi \in C_c^1 (\gO (e);\Rm)$ .\\
Equivalently  
the first symmetric eigenvalue $\gl _1^{\text{s}} (L_u, \gO (e))$ of the linearized operator $L_U (V) = - \gD V - J_F (x, U) V $ in $\gO (e)$ is nonnegative (and hence also the principal eigenvalue $\tilde {\gl }_1 (L_U ,\gO (e) )$ is  nonnegative).
\end{lemma}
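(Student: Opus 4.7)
The plan is to argue by contradiction: I assume $\gl_1^{\text{(s)}}(\gO(e))<0$ for every $e\in S^{N-1}$ and derive a violation of the Borsuk--Ulam theorem. For each $e$, hypothesis i) of Theorem \ref{fconvessa} gives full coupling of the symmetrized linearized system $\tilde L_U$ on $\gO(e)$, so by Proposition \ref{varformautov} parts vii)--viii) the first symmetric eigenvalue is simple with a componentwise positive first eigenfunction $\phi_e$; I $L^2$-normalize $\phi_e$ and extend it by zero to $\gO$, so that $\phi_e\in H^1_0(\gO;\Rm)$ and $Q_U(\phi_e;\gO)=\gl_1^{\text{(s)}}(\gO(e))<0$.

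The odd gadget I use is $G(e):=\phi_e-\phi_{-e}$, with each summand extended by zero. The supports $\overline{\gO(e)}$ and $\overline{\gO(-e)}$ meet only on the null set $T(e)\cap\overline{\gO}$, so cross terms vanish and
$$Q_U(G(e);\gO)=Q_U(\phi_e;\gO(e))+Q_U(\phi_{-e};\gO(-e))=\gl_1^{\text{(s)}}(\gO(e))+\gl_1^{\text{(s)}}(\gO(-e))<0,$$
while $G(-e)=\phi_{-e}-\phi_e=-G(e)$. The continuity of $e\mapsto \phi_e$ in $L^2(\gO)$ (from simplicity of the first eigenvalue together with continuous dependence of the cap $\gO(e)$ on $e$) gives continuity of $G$.

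Let $V\subset H^1_0(\gO;\Rm)$ be the span of the eigenfunctions of $\tilde L_U$ on $\gO$ associated with strictly negative eigenvalues; by the proposition relating the Morse index to the number of negative symmetric eigenvalues, $\dim V=\mu(U)\leq N$, and $Q_U\geq 0$ on $V^\perp$. Define $g(e):=P_V\, G(e)\in V$, the $L^2$-projection. Then $g$ is continuous and odd; it is nowhere zero, since $g(e_0)=0$ would place $G(e_0)$ in $V^\perp$ and force $Q_U(G(e_0))\geq 0$, contradicting the display above. Normalizing yields a continuous odd map $\hat g\colon S^{N-1}\to S(V)\cong S^{\mu(U)-1}$. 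By Borsuk--Ulam this forces $\mu(U)\geq N$, contradicting $\mu(U)<N$. The final assertion, nonnegativity of the principal eigenvalue $\tilde{\gl}_1$, is then immediate from Proposition \ref{principaleigenvalue} iv), since $\tilde{\gl}_1(\gO(e))\geq \gl_1^{\text{(s)}}(\gO(e))$.

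The main obstacle is the borderline case $\mu(U)=N$: here Borsuk--Ulam admits continuous odd self-maps of $S^{N-1}$, so the reasoning above does not immediately close. I would handle it by enlarging the negative subspace by one extra dimension, for example by extracting a null direction of $Q_U$ from the compact family $\{G(e)\}_{e\in S^{N-1}}$ via the uniform estimate $\gl_1^{\text{(s)}}(\gO(e))\leq-\delta<0$ supplied by compactness of $S^{N-1}$ under the contradiction hypothesis, or by exploiting the rotational invariance of the problem to produce one additional antisymmetric direction on which $Q_U\leq 0$. Either refinement upgrades the conclusion to $\mu(U)\geq N+1$, contradicting $\mu(U)\leq N$ and completing the proof.
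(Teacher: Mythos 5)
Your strategy is the same as the paper's (Borsuk--Ulam applied to an odd family built from the positive first eigenfunctions of the symmetrized linearized operator on the caps $\gO(\pm e)$), and the setup through $G(e)=\phi_e-\phi_{-e}$ is handled correctly: the extensions by zero, the vanishing of the cross terms in $Q_U(G(e);\gO)$, oddness and continuity of $e\mapsto G(e)$, and the identification $\dim V=\mu(U)$. The gap you flag at $\mu(U)=N$ is genuine, however, and neither patch you sketch closes it. Enlarging $V$ by one dimension runs the wrong way: with $\dim V'=j+1$, a nowhere-vanishing $P_{V'}G$ would contradict Borsuk--Ulam only when $j+1<N$, a strictly \emph{smaller} range than before. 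The angular derivatives $\de U/\de\theta$ satisfy $Q_U(\de U/\de\theta;\gO)=0$, not $<0$, vanish identically when $U$ is radial, and in any case push the target sphere up one dimension rather than down; the uniform bound $\gl_1^{\text{(s)}}(\gO(e))\leq-\delta$ sharpens the negativity of $Q_U(G(e))$ but produces no extra dimension.

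The paper's resolution is to \emph{remove} one dimension from the projection target, by spending a degree of freedom your construction leaves unused: the relative scaling of the two halves of $G(e)$. Let $\Phi_1,\dots,\Phi_j$ ($j=\mu(U)\geq 2$) be orthonormal eigenfunctions of $\tilde L_U$ on $\gO$ with negative eigenvalues. Instead of normalizing both $\phi_e$ and $\phi_{-e}$ to unit $L^2$-norm, multiply the piece supported in $\gO(e)$ by a continuous positive factor $a(e)$ and the piece supported in $\gO(-e)$ by $a(e)^{-1}$, with $a(e)=\bigl((\Phi_{-e},\Phi_1)_{\ldue(\gO(-e))}/(\Phi_{e},\Phi_1)_{\ldue(\gO(e))}\bigr)^{1/2}$; call the result $\Psi_e$. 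This choice makes $(\Psi_e,\Phi_1)_{\ldue(\gO)}=0$ for every $e$ while keeping $e\mapsto\Psi_e$ odd and continuous. Now the odd continuous map $h(e)=\bigl((\Psi_e,\Phi_2)_{\ldue},\dots,(\Psi_e,\Phi_j)_{\ldue}\bigr)$ takes values in $\R^{j-1}$, and since $j-1\leq N-1<N$, Borsuk--Ulam gives a zero $e_0$. Together with the built-in orthogonality to $\Phi_1$, $\Psi_{e_0}$ is then orthogonal to all of $\Phi_1,\dots,\Phi_j$, so $Q_U(\Psi_{e_0};\gO)\geq 0$, and since the two halves have disjoint supports this forces $Q_U(\Phi_{e_0};\gO(e_0))\geq 0$ or $Q_U(\Phi_{-e_0};\gO(-e_0))\geq 0$. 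This single modification upgrades your argument from $\mu(U)<N$ to $\mu(U)\leq N$; the case $\mu(U)\leq 1$ is handled separately and trivially, since two negative first eigenvalues on complementary caps would already give a two-dimensional negative subspace of $C_c^1(\gO;\Rm)$.
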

\begin{proof} The proof is immediate if $\mu (U) \leq 1$. In fact in this case for any direction $e$ at least one amongst $\gl _1^{\text{s}} (L_u, \gO (e))$  and 
$\gl _1^{\text{s}} (L_u, \gO (-e))$ must be nonnegative, otherwise taking the  corresponding first eigenfunctions we would obtain a $2$-dimensional subspace of  $C_c^1(\gO ; \Rm)$ where the quadratic form is negative definite.  \\
So let us assume that  $2\leq j= \mu (U) \leq N $ and let $\Phi _1, \dots , \Phi _j$ be mutually orthogonal eigenfunctions corresponding to the negative symmetric eigenvalues $\gl _1^{\text{s}} (L_u, \gO )$, \dots , $\gl _j^{\text{s}} (L_u, \gO )$.\\
 For any $e\in S^{N-1}$ let $\Phi _{e}$ be the first positive $L^2$ normalized eigenfunction  of the symmetric system associated to the linearized operator 
 $L_U (V) $ in $\gO (e)$. We observe that $\Phi _{e}$ is uniquely determined by the assumption i).
  Define
 \be  \Psi _{e} (x) =  
 \begin{cases} \left (  \frac {  ( \Phi _{-e} \,, \, \Phi _1 )_{\ldue (\gO (-e))} } { ( \Phi _{e}  \,, \, \Phi _1 )_{\ldue (\gO (e))}}  \right )^{\frac 12} \Phi _{e} (x)   \quad &\text{ if } x \in \gO (e)    \\
 - \,   \left (  \frac {  ( \Phi _{-e}  \,, \, \Phi _1 )_{\ldue (\gO (-e))} } { ( \Phi _{e}  \,, \, \Phi _1 )_{\ldue (\gO (e))}}  \right )^{\frac 12} \Phi _{-e} (x)                       \quad &\text{ if } x \in \gO (-e)
\end{cases}
 \ee  
 The mapping $e \mapsto \Psi _e $ is a continuous odd function from $S^{N-1}$ to $\huno $  and, by construction,  
 $ ( \Psi _e \,, \, \Phi _1 )_{\ldue (\gO )} =0$. \\
 Therefore the mapping $h: S^{N-1}\to \R ^{j-1} $ defined by
 $$ h(e)= \left ( ( \Psi _e \,, \, \Phi _2 )_{\ldue (\gO )},   \dots , ( \Psi _e \,, \, \Phi _j )_{\ldue (\gO )} \right )
 $$
 is an odd continuous mapping, and since $j-1 < N $,  by the Borsuk-Ulam Theorem it must  have a zero.
 This means that there exists a direction $e\in S^{N-1}$ such that
   $\Psi _e $ is orthogonal to all the eigenfunctions $\Phi _1, \dots , \Phi _j$. Since $ \mu (U)=j $  this implies that 
 $Q_U(\Psi _e ; \gO )  \geq 0 $, which in turn implies that either $Q_U(\Phi _e ; \gO (e) ) \geq 0 $ or $Q_U(\Phi _{-e}; \gO (-e) ) \geq 0 $, i.e. either
 $\gl _1^{\text{s}} (L_u, \gO (e))$ or $\gl _1^{\text{s}} (L_u, \gO (-e))$ is nonnegative, so the assertion is proved.
\end{proof}
\subsection{Proof of the  symmetry results}
\begin{proof}[Proof of Theorem \ref{fconvessa}]
By Lemma \ref{lemma5} there exists a direction $e$ such that the first symmetric eigenvalue $\gl _1^{\text{s}} (L_u, \gO (e))$ of the linearized operator 
  is nonnegative, so that the principal eigenvalue $\tilde {\gl}_1 (\gO (e))$ is nonnegative as well.
 Moreover by Lemma \ref{lemma4} we have that $Q_U\left ( (U-U^{\gs _{e}})^{+}  \right ) \leq 0 $, so that either $ (U-U^{\gs _{e}})^{+} \equiv 0 $, or $\gl _1^{\text{s}} (L_u, \gO (e))=0 $ and  $ (U-U^{\gs _{e}})^{+}$ is the positive first symmetric eigenfunction in $\gO (e)$. In any case we have that either $U\leq U^{\gs _{e}}$ or 
 $U\geq U^{\gs _{e}}$ in $\gO (e)$ holds.\\
If $U \equiv  U^{\gs _{e}} $ in $\gO (e)$ then by Lemma \ref{lemma3}, since the principal eigenvalue $\tilde {\gl}_1 (\gO (e))$ is nonnegative,  we immediately obtain that the solution $U$ is foliated Schwarz symmetric.
 If this is not the case and e.g. $U \geq U^{\gs _{e}} $ then,   by Lemma \ref{lemma1} 
  we have that $U > U^{\gs _{e}} $  in $\gO (e)$. \\
 We now apply, as in \cite{PW} and \cite{GPW}, the ''rotating plane method'', which is an adaptation of the Moving Plane method as developed in \cite {BN} and obtain a different direction $e'$ such that $U$ is symmetric with respect to $T(e')$ and 
the principal eigenvalue $ \tilde {\gl} _1 (L_u, \gO (e'))$ of the linearized operator  in $\gO (e')$ is nonnegative.
Then by Lemma \ref{lemma3} we get that  $U$ is foliated Schwarz symmetric. \\
More precisely, without loss of generality we suppose that $e=(0,0,\dots ,1)$ and  for $\theta \geq 0 $ we set $e_{\theta}=(\sin \theta ,0,\dots ,\cos \theta) $, so that $e_0=e$, and
$\gO _{\theta}= \gO (e_{\theta})$, $U^{\theta}= U^{\gs _{e_{\theta}}}$, $W^{\theta}=U-U^{\gs _{e_{\theta}}} $, 
Let us define $\theta _0 = \sup \{ \theta \in [0, \pi ) : U > U ^{\theta} \text{ in } \gO _{\theta}  \}$.
Then necessarily $\theta _0 < \pi$, since $(U-U^0)(x) = - (U-U^{\pi})(\gs _{e_{\pi}}(x))$ for any $x \in \gO _0$ (and $\gs _{e_{\pi}}(x)) \in \gO _{\pi}$).\\
Suppose by contradiction that $U \not \equiv U^{\theta _0}$ in $\gO _{\theta _0}$.
Then, by Lemma \ref{lemma1}) applied to    the difference $W^{\theta _0}=(U - U^{\theta _0})$,  we get that $W^{\theta _0}>0$ in $\gO _{\theta _0}$. Taking a compact $K \subset \gO _{\theta _0} $ whith small measure and such that (componentwise) 
$W^{\theta _0}>(\eta, \dots , \eta)$ for some $\eta >0$, for $\theta $ close to $\theta _0 $ we still have that $W^{\theta}>(\frac {\eta }2, \dots , \frac {\eta }2) $ in $K$, while 
$W^{\theta}>0$ in $\gO _{\theta} \setminus K$ by the weak maximum principle in domains with small measure. 
This implies that for $\theta $ greater than and close to $\theta _0 $
the inequality $U > U ^{\theta} \text{ in } \gO _{\theta} $ still holds, contradicting the definition of $\theta _0$. \\
Therefore $U \equiv U^{\theta _0}$ in $\gO _{\theta _0}$.\\
Observe that the difference $ W^{\theta} $ satisfies the linear system \eqref{EquazDifferenza} and does not change sign for any $\theta \in [0, \theta _0)$, which implies by Proposition \ref{principaleigenvalue} that it is the principal eigenfunction for the system \eqref{EquazDifferenza} corresponding to the eigenvalue $\tilde {\gl} _1 =0 $.
As $\theta \to \theta _0 $ the system \eqref{EquazDifferenza} tends to the linearized system, in the sense that the coefficients $b_{ij}(x)$ tend to the derivatives 
$\frac {\de f_i}{\de u_j}(|x|, U(x))$.  Then by continuity  the principal eigenvalue $\tilde {\gl } _1 (\gO (e_{\theta _0})) $ of the linearized operator $L_U (V) = - \gD V - J_F (x, U) V $ in $\gO (e_{\theta _0})$ is zero.
\end{proof}

\begin{proof}[Proof of Theorem \ref{teorema2}]
 Let us  choose $N$ orthogonal directions $e_1$, $e_2 , \dots e_N \in S^{N-1}$  and introduce new (cylinder) coordinates  
$(r, \theta, y_3, \dots ,y_N)$ defined by the relations $x= r [\cos \theta e_1 + \sin \theta e_2] + \sum_{i=3}^N y_i e_i $.   \\ 
Then the angular derivative $U_{\theta} $ of $U$ with respect to $\theta $, extended by zero at  the origin if $\gO $ is a ball, satisfies the linearized system, i.e.
\be \label{sistemalinearizzato}
\begin{cases}
 - \gD U _{\theta} - J_F (|x|, U) U_{\theta} &=0 \quad \text{ in } \gO  \\
 U _{\theta} &=0 \quad \text{ on } \de \gO 
 \end{cases}
\ee 
By the stability assumption we have that  the first symmetric eigenvalue of the linearized system in $\gO $ is nonnegative, so that by iv) of Proposition \ref{principaleigenvalue} also  the principal eigenvalue $\tilde {\gl _1}(\gO )$ is nonnegative.\par
Then by \eqref{sistemalinearizzato} and Proposition \ref{principaleigenvalue} we get that 
   $U_{\theta}$, if does not vanish, must be the first symmetric eigenfunction corresponding to the eigenvalue $0$ and hence it does not change sign in $\gO$. 
   Since $U_{\theta}$ is $2 \gp $- periodic this is impossible and  therefore $U_{\theta}\equiv 0 $. By the arbitrarity of the vectors $e_1$, $e_2$ we conclude that $U$ is radial.
 \end{proof}

\begin{proof}[Proof of  Corollary \ref{corollario1}]As remarked at the beginning of the proof of Lemma \ref{lemma5},
if  $U$ is a Morse index one solution for any direction $e$ either   $\gl _1^{\text{s}} (L_u, \gO (e))$ or $\gl _1^{\text{s}} (L_u, \gO (-e))$ must be nonnegative.
By the  proof of Theorem \ref{fconvessa}  we can find a direction $e$ such that $U$ is symmetric with respect to the hyperplane $T(e)$ and the principal eigenvalue 
$ \tilde {\gl} _1 (\gO (e)) = \tilde {\gl} _1 (\gO (-e))  \geq 0 $.
On the other hand by the symmetry $ \gl _1 ^{\text{s}} (\gO (e)) = \gl _1 ^{\text{s}} (\gO (-e))  \geq 0 $.
If $ \tilde {\gl} _1 (\gO (e)) >0 $ then, by the maximum principle,  the derivatives $U_{\theta} $ as defined in the proof of Lemma \ref{lemma3}, must vanish , and hence 
$U$ is radial. So if  $U_{\theta} \not \equiv 0 $ necessarily   $ \tilde {\gl} _1 (\gO (e)) = \gl _1 ^{\text{s}} (\gO (e))  =0 $ and by v) of Proposition \ref{principaleigenvalue} 
$U_{\theta}$ is the first eigenfunction of the simmetrized system, as well as a solution of \eqref{sistemalinearizzato}. So  we get that 
$J_F (|x|, U) U_{\theta}= \frac 12 \left (J_F (|x|, U) + J_F ^t (|x|, U) \right ) U_{\theta}$, i.e. 
  \eqref{superfullycoupling1} and if $m=2$, since $U_{\theta}$ is positive, we get    \eqref{superfullycoupling2}.
\end{proof}

 \section{  \textbf{Examples and comments} }
 A first type of elliptic systems that could be considered are those of ''gradient type'' (see \cite{deF2}), i.e. systems of the type \eqref{modprob}
 where $f_j(|x|,U)= \frac {\de g} {\de u_j}(|x|,U)$ for some scalar function $g \in C^{2, \ga }([0,+\infty ) \times \Rm)$.\\
 In this case the solutions correspond to critical points of the functional
 $$ \Phi (u) = \frac 12 \int _{\gO } |\nabla U|^2 \, dx - \int _{\gO } g(|x|,U) \, dx
 $$
 in $\huno (\gO )$ and the linearized operator \eqref{linearizedoperator} coincides with the second derivative of $\Phi $.\par 
 Thus standard variational methods apply which often give solutions of finite (linearized) Morse index, as, for example, in the case  when the Mountain Pass Theorem can be used. So, if the hypotheses of Theorem \ref{fconvessa} are satisfied, our symmetry results can be applied.\par
 Many systems of this type have been studied (see  \cite{deF2} and the references therein). An example is the nonlinear Schr\"odinger system
  \be \label{schrodingersystem} 
 \begin{cases}   - \Delta u_1 + u_1 &= |u_1|^{2q -2}u_1  + b |u_2|^q |u_1|^{q-2}u_1 \text{ in } \gO  \\
 - \Delta u_2 + \go ^2 u_2 &= |u_2|^{2q -2}u_2  + b |u_1|^q |u_2|^{q-2}u_1 \text{ in } \gO
 \end{cases}
 \ee
 where $b>0$, $\gO$ is either a bounded domain or the whole $\RN$, $N \geq 2$, $q>1 $ if $N=2$ and $1<q< \frac N{N-2}$ if $N \geq 3$.
 If $\gO \neq \RN $ the Dirichlet boundary conditions are imposed
 \be \label{schrodingersystembordo} u_1=u_2=0 \text{ on }  \partial \gO  
 \ee
 This type of system has been studied in several recent papers (see \cite{AC}, \cite{MMP} and the references therein).\\
 It is easy to see that the system is of gradient type and the solutions of \eqref{schrodingersystem}, \eqref{schrodingersystembordo} are  critical points of the functional
 \be I(U)=I(u_1, u_2)= \frac 12 \int _{\gO } |\nabla U| ^2 \, dx - \frac 1{2q} \int _{\gO }( |u_1|^{2q} + |u_2|^{2q}) \, dx  - \frac bq  \int _{\gO } |u_1 u_2 |^q \, dx
 \ee
 in the space $\huno (\gO )$.\par
 Thus the linearized operator \eqref{linearizedoperator} at a solution $U$ corresponds  to the second derivative of $I$ in $\huno (\gO )$ and hence the corresponding linear system is symmetric.\par
 Moreover, as observed in \cite{MMP}, the system \eqref{schrodingersystem} is cooperative, and fully coupled in $\gO (e)$, for any $e \in S^{N-1}$, along every purely vector solution $U=(u_1,u_2)$, i.e. such that  $u_1$ and  $u_2$ are both not identically zero. In particular  the system is fully coupled along  positive solutions in any $\gO (e)$.\\
 Then Theorem \ref{fconvessa} could be applied to any purely vector solution with Morse index less than or equal to $N$, in an annulus or in a ball. In particular we can consider solutions obtained by the Mountain Pass Theorem which have Morse index  not bigger than one which are purely vector positive solutions for suitable values of $b$ and $q \geq 2$, as could be obtained by the same proof of \cite{MMP} for the case $\gO = \RN$.\par 
 
 \medskip
 A second type of interesting systems  of two equations are the so called ''Hamiltonian type'' systems (see  \cite{deF2} and the references therein).
 More precisely we consider the system
 \be \label{hamiltoniansystem}
\begin{cases}  
 - \gD u_1  = f_1(|x|,u_1, u_2) & \text{ in } \gO  \\
 - \gD u_2 = f_2(|x|,u_1, u_2)   & \text{ in } \gO \\
 u_1=  u_2  =0 & \text { on } \de \gO 
\end{cases}
\ee
  with
  \be \label{hamiltonianconditions}
  f_1 (|x|,u_1, u_2)=  \frac {\de H} {\de u_2}(|x|,u_1,u_2) \quad , \quad f_2 (|x|,u_1, u_2)=  \frac {\de H} {\de u_1}(|x|,u_1,u_2)
  \ee
  for some scalar function $H \in C^{2, \ga }([0,+\infty ) \times \R ^2)$.
  These systems  can be studied by considering the associated functional
   \be J(U)=I(u_1, u_2)= \frac 12 \int _{\gO }  \nabla u_1 \cdot \nabla u_2  \, dx - \int _{\gO } H(|x|,u_1, u_2) \, dx
 \ee
 either in  $ \huno (\gO) $ or in other suitable Sobolev spaces (see \cite{deF2}). \par
 It is easy to see that the linearized operator defined in \eqref{linearizedoperator} does not correspond to the second derivative of the functional $J$, which is strongly indefinite. Nevertheless  solutions of \eqref{hamiltoniansystem} can have finite linearized Morse index  as we show with a few simple examples.\par
 \medskip
 Let us consider the following system:  
 \be \label{esponenziale}
 \begin{cases}
 - \Delta u_1 = \gl e^{u_2} & \text{ in } \gO \\
 - \Delta u_2 = \gm e^{u_1} & \text{ in } \gO \\
 u_1=  u_2  =0 & \text { on } \de \gO 
 \end{cases}
 \ee
 where $\gl ,  \gm \in \R $. \\
  If $\gl $, $\gm >0 $ then the system is fully coupled along every solution in any $\gO (e)$, $e \in S^{N-1}$, and the hypotheses of Theorem \ref{fconvessa} are satisfied. \\
  Let us consider the function
 $G: \R^2 \times \left ( C^{2, \ga }(\gO ) \right ) ^2 \to  \left ( C^{0, \ga }(\gO ) \right ) ^2$ defined by 
 $ G((\gl , \gm ), (u_1,u_2)) = ( - \Delta u_1 -  \gl e^{u_2}, - \Delta u_2 -  \gl e^{u_1} )$. \\
 We have that $ G((0 , 0 ), (0,0))=(0,0) $,   
  $\frac {\de G} {\de (u_1,u_2)}((0 , 0 ), (0,0))(\phi , \psi)= ( - \Delta \phi ,  - \Delta \psi )  $, and the first (symmetric) eigenvalue of this operator is strictly positive, so that the operator is invertible. \\
  By the implicit function theorem for small $\gl , \gm$ there is a unique nontrivial solution 
  $\left (u_1(\gl , \gm ) , u_2( \gl , \gm ) \right )$ close to the trivial solution of the system \eqref{esponenziale} corresponding to $\gl = \gm =0$. Moreover the solution is positive by the maximum principle and it is linearized stable, so that it is radial.  
  Indeed the  first (symmetric) eigenvalue of the linearized operator corresponding to $\gl = \gm =0$, $ u=v=0$ is strictly positive, and by continuity it is positive for small $\gl , \gm$.\par
  The same happens substituting the exponential with other nonlinearities $f(u_2)$, $g(u_1)$    with nonnegative derivative in a neighborhood of $0$ and such that
  $f(0), g(0) >0$.\par
  \medskip
 
We now consider, as another example, the  system  
 \be \label{potenza}
 \begin{cases}
 - \Delta u_1 &= u_2^{p }  \text{ in }  \gO  \\
 - \Delta u_2 &=  u_1^{q} \text{ in }  \gO  \\
 u_1, u_2 &>0 \text{ in } \gO \\
  u_1=u_2&=0 \text{ on } \de \gO 
 \end{cases}
 \ee
 where $1< p, q < \frac {N+2} {N-2}$.
 The idea is to proceed as in the previous example starting from the case $p=q$ and the solution $u_1=u_2=z$, where $z$ is a scalar  solution of the equation
 \be \label{equazionescalare} 
  \begin{cases}
 - \Delta z &= z^{p }  \text{ in }  \gO  \\
   z&>0 \text{ in }  \gO  \\
 z&=0 \text{ on } \de \gO 

 \end{cases}
 \ee
 which is nondegenerate. \par
 Let us observe that if $p=q$  and $z$  has Morse index equal to the integer $\mu (z)$, then $\mu (z)$ is also the Morse index  of the solution 
 $U=(u_1=u_2)=(z,z)$ of the system \eqref{potenza}. Indeed the linearized equation at $z$ for the equation \eqref{equazionescalare} and the linearized system at $(z,z)$ for the system \eqref{potenza} are respectively
  \be \label{linearizzatoequazionescalare} 
  \begin{cases}
 - \Delta \phi  - p z^{p-1 } \phi & =0  \text{ in }  \gO  \\
  \phi &=0 \text{ on } \de \gO 
 \end{cases}
 \ee
and
 \be \label{linearizzatopotenza}
 \begin{cases}
  - \Delta \phi _1  - p z^{p-1 } \phi _2 & =0  \text{ in }  \gO  \text{ in }  \gO  \\
 - \Delta \phi _2  - p z^{p-1 } \phi _1 & =0  \text{ in }  \gO   \text{ in }  \gO  \\
  \phi _1=\phi _2 &=0 \text{ on } \de \gO 
 \end{cases}
 \ee
This implies that the eigenvalues of these two operators are the same, since if $\phi $ is an eigenfunction for \eqref{linearizzatoequazionescalare} 
corresponding to the eigenvalue $\gl _k $ then taking 
$\phi _1 = \phi _2 = \phi $ we obtain an eigenfunction $(\phi _1 , \phi _2)$ for \eqref{linearizzatopotenza} corresponding to the same eigenvalue, while if 
 $(\phi _1 , \phi _2)$  is an eigenfunction for  \eqref{linearizzatopotenza} corresponding to the eigenvalue $\gl _k $ then $\phi = \phi _1 + \phi _2 $ is an 
 eigenfunction for \eqref{linearizzatoequazionescalare} corresponding to the same eigenvalue.\par 
 So proceeding as in the previous example we can start from a nondegenerate solution of \eqref{equazionescalare} with a fixed exponent 
 $\overline {p} \in (1, \frac {N+2} {N-2} )$   and find a branch of  solutions of \eqref{potenza} corresponding to (possibly different) exponents $p,q$ close to $\overline {p} $.
 For example if we start with a Mountain Pass (positive) solution $z$ in the ball of equation \eqref{equazionescalare} with the exponent $\overline {p} $,  knowing that its Morse index is one and it is nondegenerate, we get a branch of Morse index one radial solutions for  $p,q $ close to $\overline {p} $.\par 
 If $\gO $ is an annulus then a mountain pass solution $z$ of \eqref{equazionescalare} is foliated Schwarz symmetric, as shown in \cite{P}. Then, if not radial, it is obviously degenerate.
 Nevertheless, working in spaces of symmetric functions, we could remove the degeneracy and apply the continuation method described above. \par
Thus, starting from an exponent $\overline{p}$ for which the mountain pass solution $z$ of \eqref{equazionescalare} is not radial, we can construct solutions 
$U=(u_1, u_2)$ of \eqref{potenza} in correspondence of exponents $p,q$ close to $\overline{p}$, with Morse index one.
Then Theorem \ref{fconvessa} applies and, in particular, we get that the coupling condition \eqref{superfullycoupling2} holds, which in this case can be written as
  \be \label{superfullycoupling3}
  pu_2^{p-1} =  qu_1^{q-1}   \quad \text{in } \gO 
\ee 

 Note that more generally, by Corollary \ref{corollario1}, the equality \eqref{superfullycoupling3} must hold for every solution $U=(u_1,u_2)$ of \eqref{potenza} with Morse index one giving  so a sharp condition to be satisfied by the components of a solution of this type.\par
 Let us remark that our results apply also when the nonlinearity depends on $|x|$ (in any way).
 Arguing as before it is not difficult to construct systems having solutions with low Morse index, in particular with Morse index one. \par
  An example could be the "Henon system"
  \be \label{Henon}
 \begin{cases}
 - \Delta u_1 = |x|^{\alpha }u_2^{p } & \text{ in }  \gO  \\
 - \Delta u_2 =  |x|^{\beta } u_1^{q} &\text{ in }  \gO  \\
 u_1, u_2 >0  &\text{ in } \gO \\
  u_1=u_=0 & \text{ on } \de \gO 
 \end{cases}
 \ee
 with $\alpha , \beta >0$, $ p,q >1$. \par 
 Starting again from a solution of the scalar equation of mountain pass type it is possible to construct solutions of \eqref{Henon} with the same Morse index for $p,q$ and $\alpha , \beta $ close to each other.
 Note that, for some values of $\alpha , \beta  $,  $p,q$,  such  solutions would not be radial, even if $\gO $ is a ball, but rather foliated Schwarz symmetric as for the scalar case. \par
 Finally, as in the scalar case, we could consider  nonhomogeneous equations  and provide examples of convex nonlinearities for which there exist solutions of Morse index one which change sign and  to which our results apply.

\end{document}